\newtheorem{theorem}{Theorem}[section]
\newtheorem{proposition}[theorem]{Proposition}
\newtheorem{lemma}[theorem]{Lemma}
\theoremstyle{remark}
\newtheorem{remark}[theorem]{Remark}
\newtheorem{definition}[theorem]{Definition}
\newcommand{\cU}{\mathcal{U}}
\newcommand{\cW}{\mathcal{W}}
\newcommand{\R}{\mathbb{R}}
\newcommand{\C}{\mathbb{C}}
\newcommand{\Real}{\textrm{\rm Re}\,}
\newcommand{\Imag}{\textrm{\rm Im}\,}
\newcommand{\diag}{\textrm{\rm diag}\,}
\newcommand{\diff}{\textrm{\rm d}}
\newcommand{\sgn}{\textrm{\rm sgn}\,}
\begin{document}

\thispagestyle{empty}

\begin{center}
{\sf\Large Spectral stability of weak}\medskip

{\sf\Large relaxation shock profiles}\bigskip

{\tt \today}
\end{center}

\begin{equation*}
\begin{array}{cc}
\textsf{\large Corrado MASCIA}  & \textsf{\large Kevin ZUMBRUN}\\
\textsf{\footnotesize Dipartimento di Matematica ``G. Castelnuovo''}\qquad
&\textsf{\footnotesize Mathematics Department}\\
\textsf{\footnotesize Sapienza -- Universit\`a di Roma}\qquad
&\textsf{\footnotesize Indiana University}\\
\textsf{\footnotesize P.le Aldo Moro, 2 - 00185 Roma (ITALY)}\qquad
&\textsf{\footnotesize Bloomington, IN  47405-4301 (USA)}
\end{array}
\end{equation*}
\bigskip

\hrule\bigskip

\begin{quote}
{\footnotesize
{\sf Abstract:} 
Using a combination of Kawashima- and Goodman-type energy estimates,
we establish spectral stability of general small-amplitude relaxation shocks 
of symmetric dissipative systems.
This extends previous results obtained
by Plaza and Zumbrun \cite{PlZu04}
by singular perturbation techniques under an
additional technical assumption, namely, that the background equation 
be noncharacteristic with respect to the shock.
}
\end{quote}
\bigskip

\hrule\bigskip

\baselineskip=18pt

\section{Introduction}

Let us consider the one-dimensional hyperbolic system with relaxation
\begin{equation}\label{relax}
   w_t+F(w)_x=Q(w)
\end{equation}
for the unknown $w=w(x,t)\in\R^N$, $x\in\R, t>0$.
Here $F\in C^2(\R^N\,;\,\R^N)$ is such that $\diff F(w)$ has $N$ real distinct
eigenvalues for any state $w$ under consideration and $Q\in C^1(\R^N\,;\,\R^N)$ 
has the structure $Q(w)=(0_n,q(w))$ where $q\in C^1(\R^N\,;\,\R^r)$, $r=N-n$.
Additionally, we assume the function $q$ to have a relaxation structure:
let $w=(u,v)\in{\mathcal U}\times {\mathcal V}\subseteq \R^n\times\R^r$,\\
{\sf i.} there exists a $C^1$ function $v^*\,:\,{\mathcal U}\subset\R^n\to\R^r$
such that $q(w)=0$ in ${\mathcal U}\times {\mathcal V}$
if and only if $v=v^*(u)$ where $w=(u,v)\in\R^n\times\R^r$;\\
{\sf ii.} for any $u\in{\mathcal U}$, all of the eigenvalues of $\diff_u q(u,v^*(u))$ 
have negative real part.

As a consequence, it is natural to introduce the corresponding {\sf relaxed} hyperbolic 
system of conservation laws, formally obtained by considering the first $n$ equations
of (\ref{relax}) and substituting the variable $v$ with the equilibrium $v=v^*(u)$
\begin{equation}\label{relaxed}
 u_t+f^*(u)_x=0
 \qquad\qquad\textrm{where}\quad 
 f^*(u):=f(u,v^*(u)).
\end{equation}
System (\ref{relax}) possesses smooth traveling wave solutions 
corresponding to shock waves of the relaxed system (\ref{relaxed}) 
at least in the small--amplitude case.\footnote{In all of the paper, 
we will not  consider the case of  nonsmooth relaxation shock 
profiles, i.e. exhibiting subshocks. This is not restrictive since
the smallness assumption of the profile, usually, guarantees
also smoothness.}
Existence of such special solutions has been given for
specific models in the large-amplitude case --- for example,
the Broadwell model, \cite{Broa64} ---, or for general
relaxation system in the small--amplitude case, 
see \cite{YoZu00, MaZu02}.
By changing frame, such travelling wave can be assumed,
without loss of generality, as stationary solution of
(\ref{relax}), i.e. solution of the form 
 \begin{equation}\label{relaxshock}
  W=W(x),\qquad\qquad W(\pm\infty)=W_\pm.
 \end{equation} 
where $W_\pm=(u_\pm,v^*(u_\pm))$ with $u_\pm$ denoting the
state connected by the corresponding relaxed shock wave.

The next natural question to answer is whether such
steady states are stable or unstable.
The equation for the perturbation $w:=\tilde w-W$ is
\begin{equation*}
   w_t+\bigl(F(W+w)-F(W)\bigr)_x=Q(W+w)-Q(W).
 \end{equation*}
and the corresponding linearized equation is
 \begin{equation}\label{linearized}
   w_t={\mathcal L} w:=-\bigl(\diff F(W)w\bigr)' +\diff Q(W)w. 
  \end{equation}
Thus, the linearized eigenvalue equation is
  \begin{equation}\label{resolvent}
    (\lambda\,I-{\mathcal L})w
    =\lambda\,w+\bigl(\diff F(W)w\bigr)' -\diff Q(W)w=0. 
  \end{equation}
From now on, we consider equation (\ref{resolvent}) in 
the Sobolev space $H^1$ and we say that $\lambda\in\C$
is an eigenvalue of $\mathcal L$ if there exists a function
$w\in H^1\setminus\{0\}$, such that (\ref{resolvent}) 
holds.\footnote{On the region $\Real \lambda \ge 0$ and $\lambda\ne 0$ 
that we will consider, and under our hypotheses (A1)--(A2) below,
$H^1$ spectrum agrees with $L^p$ spectrum for any
$1\le p\le \infty$; see \cite{MaZu05}.}

By differentiating the equation satisfied by the profile $W$,
we get
  \begin{equation*}
    \bigl(\diff F(W)W'\bigr)' -\diff Q(W)W'=0. 
  \end{equation*}
In the noncharacteristic case, i.e. $\diff F(W_\pm)$ invertible,
$W'$ decays exponentially fast to zero as $|x|\to\infty$.
Thus, $\lambda=0$ is an eigenvalue of the linearized operator 
${\mathcal L}$. 
Hence, instability is related to the presence
of non-zero eigenvalues with non-negative real part.

\begin{definition}
The stationary solution $W$ is {\sf spectrally stable} in $H^1$,
if for any $\lambda\in\C\setminus\{0\}$,  $\Real\lambda\geq 0$,
whenever $w\in H^1$ solves the resolvent equation (\ref{resolvent}) 
then $w\equiv 0$.
\end{definition}  
 
Results in \cite{MaZu02, MaZu05} show that, under additional
assumptions, spectral stability implies both linear and nonlinear
orbital stability.
Hence, determining whether spectral stability holds or not
is the key issue for determining nonlinear stability of relaxation
shock profiles.
The aim of the present paper is to prove a general result on
spectral stability of relaxation shock profiles assuming smallness
of $\varepsilon:=|W_+-W_-|$.
\medskip

We divide the assumptions into three groups: {\sf A}, {\sf B} 
and {\sf C}, referring, respectively, to the relaxation system 
(\ref{relax}), to the the relaxed system (\ref{relaxed}), to the 
relaxation shock profile (\ref{relaxshock}).
These are imposed on a small neighborhood $\cW\subset \R^n$ about an
equilibrium point $w_0=(u_0,v_0)$, $q(w_0)=0$, or equivalently 
$v_0=v_*(u_0)$, and a neighborhood $\cU^*$ of $u_0$ such
that the graph of $v_*$ over $\cU^*$ is contained in $\cW$.
\medskip

{\sl
\noindent
{\sf A1.} There exists a smooth function $A_0=A_0(W)$ from 
$\cW\subset \R^n$ to the set of real symmetric positive definite
matrices such that $(A_0\diff F)(W)$ is symmetric
for any $W$ under consideration, and 
\begin{equation}\label{propA0}
 \Real\langle w,(A_0\,\diff Q)(W_\pm)\,w\rangle
 \leq -c|\Pi_\pm w|_{{}_{L^2}}^2,
\end{equation}
for some $c>0$, where $\Pi_\pm\,w:=\diff Q(W_\pm)\,w$.\\
{\sf A2.} {\it (Shizuta--Kawashima condition)} There exists a 
smooth skew-symmetric matrix--valued  function $K$, 
depending on $\diff F,\diff Q$ and $A_0$ such that 
\begin{equation}\label{skcondition}
   \Real\,(K \diff F-A_0\,\diff Q)>0.
\end{equation}
{\sf B1.} There exists a smooth function $a_0=a_0(u)$ from 
$\cU^*\subset \R^p$ to the set of real symmetric positive definite
matrices such that $a_0\,\diff f^*(u)$ is symmetric
for any $u$ under consideration and $a_0\,b_0$ 
is positive semidefinite, $\Real a_0b_0\ge 0$, where $b_0$ as
defined in \eqref{bs} is the associated Chapman--Enskog viscosity.\\
{\sf B2.} {\it (reduced Shizuta--Kawashima condition)} There exists a 
smooth skew-symmetric matrix--valued  function $k$, 
depending on $\diff f^*,b_0$ and $a_0$ such that 
\begin{equation}\label{rskcondition}
   \Real\,(k\,\diff f^*-a_0\,b_0)>0.
\end{equation}
{\sf B3.} 
{\it (Simplicity, genuine nonlinearity of principle equilibrium characteristic)}
There exists $c_0>0$ such that there is a single eigenvalue
$\alpha_0(u)$ of $\diff f^*(u)$ that has absolute 
value $<c_0$ on $\cU^*$,  with all others of absolute value $\ge 2c_0$.  
Moreover,
\begin{equation}\label{gnl}
\diff \alpha_0(u) \cdot r_0(u)=:\eta(u) \ne 0 
\end{equation}
on $\cU^*$, where $r_0(u)$ denotes
the unit right eigenvector associated with $\alpha_0(u)$. \\
{\sf C.}
There exists $C>0$ such that for any $x\in\R$ there hold
\begin{equation}\label{sizeder}
   |W'|_{{}_{L^\infty}}\leq C|W_+-W_-|^2,\qquad\qquad
   |W''(x)|\leq C|W_+-W_-|\,|W'(x)|,
\end{equation}
and
\begin{equation}\label{dirder}
   \Big|\frac{W'}{|W'|} +\sgn (\eta) r_0\Big|\leq C\,|W_+-W_-|^2.
\end{equation}
}

\begin{remark}\label{strucrmk}
The apparently restrictive  {\sf A1--A2}, {\sf B1--B2}
in fact all follow from the standard assumptions that
(i) there exist a positive definite symmetrizer $A_0$, $A_0dF$ symmetric,
that {\it at equilibrium points} simultaneously symmetrizes 
$dQ$, $A_0dQ$ symmetric (weak simultaneous symmetrizability), and
(ii) at equilibrium points, no eigenvector of $dF$ is in the kernel
of $dQ$ (genuine coupling);
see Lemma \ref{struclem}, Appendix \ref{strucapp}.
These two assumptions hold quite generally
in applications, in particular for discrete kinetic
equations and moment closure systems \cite{Yong01}.
Assumption {\sf B3} is standard and easily checked.
\end{remark}

\begin{remark}
Assumption {\sf C} is satisfied for a family of profiles near $w_0$ if:

 (i) $\diff F$ is invertible and $\alpha_0(u_0)=0$ (see Appendix of \cite{MaZu02}).

(ii) $\diff F$ is constant, $\alpha_0(u_0)=0$, and dimension $N$ bounded, 
e.g. in the case of discrete kinetic models with upper bound on the
number of modes (see \cite{MaZu05}).
It has been shown to hold also for the infinite-dimensional case
of the Boltzmann equation \cite{LiYu04}.\\
\end{remark}

With these assumptions, our main result is as follows.

\begin{theorem}[Spectral stability]\label{thm:spst}
Under assumptions 
{\sf A1--A2, B1--B2--B3}, and {\sf C,}
for $\varepsilon:=|W_+-W_-|$ sufficiently small,
the relaxation shock $W$ is spectrally stable.
\end{theorem}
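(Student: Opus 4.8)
The strategy is an energy‑estimate argument in the spirit of Kawashima and Goodman, combining a "large‑frequency" bound coming from the Shizuta–Kawashima structure (assumptions \textsf{A1--A2}) with a "small‑frequency" bound coming from the reduced, Chapman–Enskog/scalar picture (assumptions \textsf{B1--B2--B3}), the profile estimates \textsf{C} providing the smallness that glues the two regimes together. Throughout, $\varepsilon = |W_+-W_-|$ is the small parameter, and by \textsf{C} we have $|W'|_{L^\infty}\lesssim \varepsilon^2$, $|W''|\lesssim \varepsilon |W'|$, and $W'/|W'|$ is $O(\varepsilon^2)$‑close to $-\sgn(\eta)\,r_0$, so the profile is to leading order a one‑parameter curve tangent to the principal equilibrium eigendirection.

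First I would reduce to an a priori estimate: assume $w\in H^1\setminus\{0\}$ solves $(\lambda I-\mathcal L)w=0$ with $\Real\lambda\ge 0$, $\lambda\ne 0$, and show $w\equiv 0$. Pair the resolvent equation \eqref{resolvent} against $A_0(W)w$ in $L^2$, take real parts, and integrate by parts. Since $(A_0\,\diff F)(W)$ is symmetric, the flux term produces (up to a commutator term controlled by $|W'|\lesssim\varepsilon^2$) a boundary‑free quadratic form; the term $\langle w, A_0\,\diff Q(W)w\rangle$ is negative on the "hyperbolic" component $\Pi_\pm w$ by \eqref{propA0}, again up to an $O(\varepsilon^2)$ error from replacing $A_0\,\diff Q(W)$ by $A_0\,\diff Q(W_\pm)$. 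This gives control of $\Real\lambda\,|w|_{L^2}^2$ plus a good term $c\,|\Pi w|_{L^2}^2$, but leaves the "equilibrium" part $(I-\Pi)w$ uncontrolled. To recover it, add a Kawashima‑type correction: pair the resolvent equation against $K(W)w_x$ (the skew‑symmetric compensator of \textsf{A2}), which by \eqref{skcondition}, $\Real(K\,\diff F-A_0\,\diff Q)>0$, produces a positive multiple of $|w_x|_{L^2}^2$ at the price of terms absorbable into $\Real\lambda\,|w|^2$, $|\Pi w|^2$, and $\varepsilon$‑small remainders. The upshot of this first block is: for $\Real\lambda\ge 0$, $\lambda\ne 0$, one has $|w|_{H^1}^2 \lesssim$ (contributions localized near the shock layer, i.e. weighted by $|W'|$) $+\ (\text{small})\,|w|_{H^1}^2$, so the only way an eigenfunction can survive is by concentrating in the shock layer at low frequency $|\lambda|\lesssim\varepsilon$.

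Next comes the genuinely hard part: the low‑frequency regime $0<|\lambda|\lesssim\varepsilon$, where the Kawashima estimate is not quantitatively strong enough. Here I would follow Goodman's approach for the reduced system: project onto the equilibrium manifold via the Chapman–Enskog decomposition, writing $w = (I-\Pi)w + \Pi w$ with $\Pi w = O(\varepsilon)\,(I-\Pi)w$ on the relevant scales, so that to leading order $w$ is governed by a viscous scalar equation for the principal mode — the viscosity $b_0$ and symmetrizer $a_0,k$ of \textsf{B1--B2} playing the role of $A_0,K$ — with flux derivative $\alpha_0(u)$ and genuine nonlinearity $\eta=\diff\alpha_0\cdot r_0\ne 0$ from \textsf{B3}. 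For this scalar viscous profile the classical Goodman weighted energy estimate applies: pairing against $a_0 w$ with a Goodman‑type weight built from the profile (exploiting that $W'/|W'|\approx-\sgn(\eta)r_0$ from \eqref{dirder}, so the convection term has a sign), together with the genuine‑nonlinearity sign condition $\Real a_0 b_0\ge0$, kills any nonzero $H^1$ solution with $\Real\lambda\ge0$. The two estimates are then combined: rescaling $x\mapsto x/\varepsilon$, $\lambda\mapsto\lambda/\varepsilon$ makes the shock layer $O(1)$ and the low‑frequency window $O(1)$; on this window the Goodman estimate rules out eigenvalues, while for $|\lambda|\gtrsim\varepsilon$ (equivalently $O(1)$ after rescaling, going out to the Kawashima regime) the first block rules them out. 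A continuity/compactness argument in $\lambda$ across the overlap region closes the gap. The main obstacle I anticipate is precisely this matching: making the Chapman–Enskog reduction quantitative enough that the error terms (coupling between $\Pi w$ and $(I-\Pi)w$, the discrepancy between the exact profile and the reduced scalar profile, and the commutators $[A_0,\partial_x]$, $[K,\partial_x]$) are all genuinely $o(1)$ relative to the good terms produced by the two sign conditions \eqref{skcondition}, \eqref{rskcondition}, uniformly on the rescaled low‑frequency window — this is where assumption \textsf{C}, and in particular the quadratic bounds \eqref{sizeder}--\eqref{dirder}, must be used sharply rather than just for order‑of‑magnitude bookkeeping.
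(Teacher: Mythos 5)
Your first block (pairing the resolvent equation against $A_0w$ and against a Kawashima compensator applied to $w_x$, with the $O(\varepsilon^2)$ commutator bookkeeping from \textsf{C}) matches the paper's Lemma \ref{lemma:fried} and Proposition \ref{prop:kawa}, including the conclusion $\Real\lambda\le C\varepsilon^2$, $|\Imag\lambda|\le C\varepsilon$. The second half, however, has a genuine gap in two respects. First, you never introduce the integrated variable $z(x)=\int_{-\infty}^x u(y)\,dy$, and this is the essential device: the Goodman mechanism does not work when you pair a weight against $u$ (or $w$) directly, because the good sign comes from the convection term acting on the \emph{antiderivative} --- after integrating the conservative part of the system one pairs $\rho\,\zeta$ against $\alpha\,\zeta'$ and obtains $-\tfrac12\langle\zeta,(\rho'\alpha+\rho\,\diff\alpha\,W')\zeta\rangle$, which is negative definite in the weighted norm $|\cdot|_{W'}$ precisely because $\diff\alpha_0\,W'\le -C|W'|$ by \eqref{gnl} and \eqref{dirder}. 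Your proposed pairing ``against $a_0w$ with a Goodman-type weight'' does not produce this term. The paper's route is: an approximate Chapman--Enskog elimination of $\hat v:=q_2^{-1}q_1u+v$ yields a viscous equation \eqref{zequation} for $z$ with controlled error terms $\Theta_1+\Theta_2'$, to which reduced Kawashima estimates (Proposition \ref{prop:kawareduced}) and Goodman weighted estimates after the block-diagonalization of Lemma \ref{special} (Proposition \ref{goodprop}) are applied; the error terms involving $\hat v$ and $w'$ are then absorbed using the full-system Kawashima bound $|\Pi w|^2_{L^2}+|w'|^2_{L^2}\le C\varepsilon^2|u|^2_{L^2}$.

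Second, your proposed glue --- rescaling $x\mapsto x/\varepsilon$, $\lambda\mapsto\lambda/\varepsilon$ and a ``continuity/compactness argument in $\lambda$ across the overlap region'' --- is essentially the singular-perturbation/Evans-function matching of Plaza--Zumbrun \cite{PlZu04}, which requires the noncharacteristicity hypothesis $\det\diff F\ne 0$ so that the rescaled slow system is regular. That hypothesis is exactly what this theorem dispenses with (assumption \textsf{B3} allows $\alpha_0(u_0)=0$ with $\diff F$ possibly singular), so the matching step as you describe it would fail in the very cases the theorem is designed to cover. The paper avoids any frequency splitting altogether: the three estimates \eqref{kawa}, \eqref{kawared}, \eqref{good} hold simultaneously for every $\lambda$ with $\Real\lambda\ge 0$, and combine algebraically to give $|u|^2_{L^2}\le C\varepsilon^2|u|^2_{L^2}$, forcing $w\equiv 0$. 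To repair your argument you would need to replace the rescaling/compactness step by the integrated-variable Goodman estimate closed against the Kawashima bound in this purely quantitative way.
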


As the argument is somewhat complicated, it may be helpful
to outline here the structure of the proof.
We start by carrying out the following by-now-standard
``Kawashima-type'' energy estimates on the relaxation system.

\begin{proposition}\label{prop:kawa} 
Assume hypothesis {\sf A1-A2} and {\sf C}. 
Let $\lambda\in\C$ such that $\Real\lambda\geq 0$
and let $w$ be a solution of (\ref{resolvent}).
Then for $\varepsilon$ sufficiently small,  there hold:
 \begin{eqnarray}
  \label{kawa}
   &\Real\,\lambda\, |w|^2_{{}_{L^2}}+|\Pi\,w|^2_{{}_{L^2}}
   +|w'|^2_{{}_{L^2}}\leq C\varepsilon^2\,|u|^2_{{}_{L^2}}
    \qquad &\textrm{\it (Kawashima estimate)}\\
  \label{evalueEst}
 &\Real\lambda \leq C\,\varepsilon^2,\qquad 
   |\Imag\lambda|\leq C\,\varepsilon. &
  \end{eqnarray} 
where $\Pi:=\Pi_++\Pi_-$.
\end{proposition}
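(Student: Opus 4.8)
The plan is to perform the classical Kawashima-type energy estimate directly on the resolvent equation \eqref{resolvent}, exploiting the symmetrizer $A_0$ and compensator $K$ supplied by {\sf A1--A2}. First I would rewrite \eqref{resolvent} as $\lambda w = -(\diff F(W)\,w)' + \diff Q(W)\,w$ and take the real part of the $L^2$ pairing of this equation against $A_0(W)\,w$. Since $A_0$ is symmetric positive definite and $A_0\diff F$ is symmetric, the flux term contributes $\Real\langle A_0 w, (\diff F\,w)'\rangle = -\tfrac12\langle w, (A_0\diff F)'\,w\rangle + \text{(symmetric boundary/derivative terms)}$, where every term carrying a derivative of the coefficients is controlled by $|W'|_{L^\infty}\le C\varepsilon^2$ from assumption {\sf C}; the symmetric part of $A_0\diff F$ produces no sign-definite contribution, so this is pure error of size $O(\varepsilon^2)|w|^2_{L^2}$. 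The relaxation term gives $\Real\langle w, A_0\diff Q(W)\,w\rangle$, which by \eqref{propA0} (after absorbing the $O(\varepsilon)$ discrepancy between $\diff Q(W)$ and $\diff Q(W_\pm)$, and between $\Pi$ and $\Pi_\pm$, into $C\varepsilon^2|w|^2$ errors, using $\Real\lambda\ge0$) is bounded above by $-c|\Pi w|^2_{L^2}$. Combined with $\Real\lambda|w|^2_{L^2}\ge0$ this yields a first inequality of the form $\Real\lambda|w|^2_{L^2} + c|\Pi w|^2_{L^2} \le C\varepsilon^2|w|^2_{L^2}$, but with no control yet on $|w'|_{L^2}$.

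To recover the derivative term I would next use the Kawashima compensator: pair \eqref{resolvent} against $K(W)\,w'$ (or, equivalently, differentiate and pair against $Kw$), and take real parts. The skew-symmetry of $K$ kills the $\lambda$-term and the top-order flux term up to coefficient-derivative errors of size $O(\varepsilon^2)$, while the crucial cross term $\Real\langle K\diff F\, w', w'\rangle$ together with $-\Real\langle K\diff Q\, w, w'\rangle$ reproduces, after the standard manipulation, a term $\ge \theta\,\Real\langle(K\diff F - A_0\diff Q)w,w\rangle$ minus $|\Pi w|^2_{L^2}$ and minus a small multiple of $|w'|^2_{L^2}$; here \eqref{skcondition} gives $\Real(K\diff F - A_0\diff Q)>0$, which when combined appropriately with the first estimate controls a positive multiple of $|w'|^2_{L^2} + |w|^2_{L^2}$. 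The precise bookkeeping — choosing the right linear combination of the $A_0$-estimate and the $K$-estimate with a small parameter in front of the latter, so that the favorable terms dominate the errors — is routine but must be done carefully; all the "bad" terms are either controlled by $|\Pi w|^2$ (absorbed by the $A_0$-estimate) or by $\varepsilon^2(|w|^2 + |w'|^2)$ (absorbed for $\varepsilon$ small). The outcome is \eqref{kawa}, with the right-hand side $C\varepsilon^2|u|^2_{L^2}$ rather than $C\varepsilon^2|w|^2_{L^2}$ because the $v$-component of $w$ is controlled by $|\Pi w|_{L^2}$ (genuine coupling / the structure of $\diff Q$, so that $|v| \lesssim |\Pi w|$ modulo $O(\varepsilon)|u|$ errors), and $|\Pi w|^2$ is already on the left-hand side.

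For \eqref{evalueEst}, I would take the pairing of \eqref{resolvent} against $A_0 w$ and read off the real and imaginary parts separately. The real part, as above, gives $\Real\lambda\,|w|^2_{L^2} \le C\varepsilon^2|w|^2_{L^2} + (\text{terms absorbed})$, hence $\Real\lambda \le C\varepsilon^2$ once $w\ne0$ and one has shown $|w|^2_{L^2}$ does not degenerate (which follows since, if $|w|_{L^2}\ne 0$, the inequality divides through). The imaginary part gives $|\Imag\lambda|\,|w|^2_{L^2} \le |\langle A_0 w, (\diff F\, w)'\rangle| + |\langle w, A_0\diff Q\, w\rangle| \le C|w|_{L^2}|w'|_{L^2} + C|\Pi w|_{L^2}|w|_{L^2} + C\varepsilon^2|w|^2_{L^2}$, and inserting the bounds $|w'|_{L^2}, |\Pi w|_{L^2} \le C\varepsilon|u|_{L^2}\le C\varepsilon|w|_{L^2}$ from \eqref{kawa} gives $|\Imag\lambda| \le C\varepsilon$.

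The main obstacle I anticipate is not any single estimate but the interlocking of constants: the $K$-estimate introduces a term $+C|w'|^2_{L^2}$ on the wrong side unless its coefficient is taken small, yet taking it small weakens the good term $\theta\Real\langle(K\diff F-A_0\diff Q)w,w\rangle$ that one needs to dominate $|w|^2_{L^2}$; simultaneously the transition from $\diff Q(W_\pm)$ in \eqref{propA0} to the variable-coefficient $\diff Q(W)$ must be handled so that the resulting error is genuinely $O(\varepsilon^2)$ and not merely $O(\varepsilon)$ — this is exactly where assumption {\sf C}, in particular $|W'|_{L^\infty}\le C\varepsilon^2$ and the decay $|W''|\le C\varepsilon|W'|$, is essential rather than cosmetic. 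Getting the final right-hand side to be $C\varepsilon^2|u|^2_{L^2}$ (the $u$-component only) rather than $C\varepsilon^2|w|^2_{L^2}$ requires one extra observation tying $|v|_{L^2}$ to $|\Pi w|_{L^2}$ via the block structure $Q=(0,q)$ and hypothesis {\sf A1}, and threading that through the combined estimate is the delicate point.
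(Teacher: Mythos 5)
Your overall strategy---zeroth-order Friedrichs estimate with the symmetrizer $A_0$, a compensator estimate with $K$, combination via Shizuta--Kawashima, absorption of the $O(\varepsilon^2)$ coefficient-variation errors through the linear interpolation of $\diff Q$ between $W_\pm$, and conversion of $|w|^2_{L^2}$ to $|u|^2_{L^2}$ via $|w|^2_{L^2}\le C(|u|^2_{L^2}+|\Pi w|^2_{L^2})$---is exactly the paper's. But your compensator step, as written, does not close, and the gap is structural. The condition \eqref{skcondition} must be tested on $w'$: the good term one needs is $\Real\langle w',(K\diff F-A_0\,\diff Q)w'\rangle\ge\theta|w'|^2_{L^2}$. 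Pairing \eqref{resolvent} against $Kw'$ produces an upper bound on $\Real\langle w',K\diff F\,w'\rangle$, but nothing of the form $-\Real\langle w',A_0\,\diff Q\,w'\rangle$; the term $\Real\langle w',K\diff Q\,w\rangle$ you invoke is simply estimated away by $\eta^{-1}|\Pi w|^2_{L^2}+\eta|w'|^2_{L^2}$ and plays no role in the Shizuta--Kawashima combination. The missing ingredient is a \emph{first-order} Friedrichs estimate: differentiate \eqref{resolvent} in $x$ and pair against $A_0w'$, obtaining $\Real\lambda|w'|^2_{L^2}-\Real\langle w',A_0\,\diff Q\,w'\rangle\le C\varepsilon^2|w|^2_{H^1}$ (this is where the second bound in \eqref{sizeder}, $|W''|\le C\varepsilon|W'|$, is actually consumed). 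Only the sum of this with the compensator estimate exposes $\Real\langle w',(K\diff F-A_0\,\diff Q)w'\rangle$. Your claimed good term $\theta\,\Real\langle(K\diff F-A_0\diff Q)w,w\rangle$ is tested on $w$ rather than $w'$ and does not arise from any pairing you perform; the ensuing assertion that the combination ``controls a positive multiple of $|w'|^2_{L^2}+|w|^2_{L^2}$'' cannot be correct, since $|w|^2_{L^2}\le C\varepsilon^2|u|^2_{L^2}$ would force $w\equiv0$ outright and render Propositions \ref{prop:kawareduced} and \ref{goodprop} superfluous. The left-hand side of \eqref{kawa} deliberately contains only $\Real\lambda|w|^2_{L^2}$, $|\Pi w|^2_{L^2}$ and $|w'|^2_{L^2}$; the equilibrium component $u$ genuinely survives on the right and is handled by the reduced system in the rest of the paper.

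A second, smaller point: you pair against $K(W)w'$ with variable $K$. The paper takes $K=K(W_+)$ \emph{constant} (legitimate because \eqref{skcondition} is an open condition and the profile has small amplitude) precisely so that $\langle w',Kw\rangle$ is real, whence $|\Real(\lambda\langle w',Kw\rangle)|=\Real\lambda\,|\Real\langle w',Kw\rangle|\le C\,\Real\lambda\,|w|^2_{H^1}\le C\varepsilon^2|w|^2_{H^1}$ by \eqref{fried0eps}. With variable $K$ the contribution $\Imag\lambda\cdot\Imag\langle w',Kw\rangle$ appears, and at this stage there is no bound on $\Imag\lambda$ (the bound $|\Imag\lambda|\le C\varepsilon$ is a consequence of \eqref{kawa}, not an input), so ``skew-symmetry kills the $\lambda$-term'' requires the constancy of $K$. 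Your treatment of \eqref{evalueEst} is otherwise the paper's.
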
  

Evidently, it remains only to obtain estimates on the equilibrium
variable $|u|_{{}_{L^2}}^2$.
To this end, we carry out an approximate Chapman--Enskog expansion,
keeping track of error terms, to obtain an effective viscous
system for $u$ of the same symmetric dissipative type, but with
error terms coming from higher derivatives.
Applying Goodman-type energy estimates to the integrated version of this
reduced system, following \cite{HuZu02},
we obtain the desired bounds on $|u|_{{}_{L^2}}^2$ modulo errors
consisting of higher-derivative and dissipative  terms (denoted by
$\hat v$ in Section \ref{sec:redsystem}).
Observing that these, by \eqref{kawa}, may be absorbed
in lower-order and equilibrium terms, we are done.

More precisely, we establish the following bounds on 
the reduced system.
Here we use the following notation for the 
$W'$--weighted $L^2$--norm
\begin{equation*}
 |z|_{{}_{W'}}:=|\sqrt{|W'|}\,z|_{{}_{L^2}}
  =\left(\int_{\R} |z|^2\,|W'|(x)\,dx\right)^{1/2}
\end{equation*}
The space of functions with  bounded $|\cdot|_{{}_{W'}}$ will
be denoted by $L^2_{{}_{W'}}$.
Since $W'$ is bounded,
there holds $|z|_{{}_{W'}}\leq C\,|z|_{{}_{L^2}}$ for some $C>0$
(a natural choice is $C:=|W'|_{{}_{L^\infty}}^{1/2}$).
Hence $L^2\subset L^2_{{}_{W'}}$ with continuous injection.
The opposite inequality is false since  
$W'$ decays (exponentially fast) to zero as $|x|\to\infty$.
  
\begin{proposition}\label{prop:kawareduced}
Assume hypothesis {\sf B1-B2} and {\sf C}.
Let $z$ be defined as follows
\begin{equation*}
 z(x):=\int_{-\infty}^x u(y)\, dy.
\end{equation*}
Then, for $\varepsilon$ sufficiently small, there holds
\begin{equation}\label{kawared}
  \Real\lambda\,|z|_{{}_{L^2}}^2+|u|_{{}_{L^2}}^2\leq C|z|_{{}_{W'}}^2.
 \end{equation}
\end{proposition}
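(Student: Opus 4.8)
The plan is to descend from the full resolvent equation \eqref{resolvent} to an effective $n\times n$ viscous eigenvalue equation for the equilibrium variable $u$ alone by an approximate Chapman--Enskog expansion, to integrate that reduced equation once in $x$, and then to run a Goodman-type energy estimate on the integrated system, in the spirit of \cite{HuZu02}. Concretely, writing $w=(u,v)$ and using that $\diff_v q(W)$ is invertible near equilibrium (hypothesis {\sf ii}), the last $r$ components of \eqref{resolvent} may be solved for $v$ as a function of $u$, $u'$ and $\lambda u$; feeding this back into the first $n$ components produces an equation of the form
\begin{equation*}
  \lambda\,u+\bigl(\bar A(x)\,u\bigr)' -\bigl(\bar B(x)\,u'\bigr)'=\mathcal R ,
\end{equation*}
where $\bar A(x)$ and $\bar B(x)$ converge exponentially to $\diff f^*(u_\pm)$ and to the Chapman--Enskog viscosity $b_0$ as $x\to\pm\infty$; by {\sf B1} the matrix $a_0(x)\bar A(x)$ is symmetric and $\Real\,a_0(x)\bar B(x)\geq 0$, up to $O(\varepsilon)$ commutator terms; and the remainder $\mathcal R$ is the sum of a term $O(|W'|)\,u$ and higher-derivative/dissipative corrections (the $\hat v$-terms of Section \ref{sec:redsystem}), each carrying an extra factor of $\varepsilon$ or an extra $x$-derivative. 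Since $\lambda\ne 0$ and the eigenfunction decays exponentially, integrating \eqref{resolvent} over $\R$ and inspecting its first $n$ components forces $\int_\R u=0$, so $z=\int_{-\infty}^x u$ vanishes at both $\pm\infty$; integrating the reduced equation once in $x$ then gives $\lambda\,z+\bar A(x)\,z'-\bar B(x)\,z''=\mathcal M$ with $\mathcal M(x):=\int_{-\infty}^x\mathcal R\,\diff y$ and no boundary contribution at $-\infty$.

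The core step is a Goodman-type estimate obtained by taking the real part of the $L^2$-pairing of the integrated equation with $a_0(x)\,z$. The zeroth-order term yields $\Real\,\langle a_0 z,\lambda z\rangle\geq\theta\,\Real\lambda\,|z|_{{}_{L^2}}^2$ by positive-definiteness of $a_0$; the second-order term, after integration by parts, contributes a quadratic form in $z'$ that is nonnegative by $\Real\,a_0 b_0\geq 0$ ({\sf B1}) plus $O(\varepsilon)$ commutators; and the convection term, after integration by parts and using the symmetry of $a_0\bar A$, becomes $-\tfrac12\int_\R z^{*}(a_0\bar A)'(x)\,z\,\diff x$, with $(a_0\bar A)'=O(|W'|)$. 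Decomposing $z=z_1\,r_0+z_{II}$ into its principal component $z_1:=\ell_0 z$ ($\ell_0$ the left eigenvector dual to $r_0$) and its transverse part, hypothesis {\sf C} --- which says $W'/|W'|$ equals $-\sgn(\eta)\,r_0$ up to $O(\varepsilon^2)$ --- together with {\sf B3} (genuine nonlinearity $\eta\ne 0$ and smallness of the principal characteristic $\alpha_0$) forces $-(a_0\bar A)'$ to be comparable to $+|\eta|\,|W'|$ on the $r_0$-direction, producing a coercive contribution $c\,|z_1|_{{}_{W'}}^2$ to the left-hand side. The transverse modes $z_{II}$, and with them the transverse part of $u=z'$, are controlled by a separate hyperbolic/elliptic estimate exploiting the spectral gap of {\sf B3} (the non-principal eigenvalues of $\bar A$ are bounded away from $0$), while the principal part $u_1=z_1'$ is recovered from the scalar Goodman estimate on the $z_1$-equation, whose effective viscosity is positive thanks to {\sf B2}. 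Collecting these contributions one reaches, schematically,
\begin{equation*}
  \Real\lambda\,|z|_{{}_{L^2}}^2+|u|_{{}_{L^2}}^2+|z_1|_{{}_{W'}}^2
    \leq C\,\bigl|\langle a_0 z,\mathcal M\rangle\bigr|+C\varepsilon\,|u|_{{}_{L^2}}^2 .
\end{equation*}

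It then remains to dispose of $\mathcal M$. Its $O(|W'|)\,u$-part integrates back to $O(|W'|)\,u$, so its pairing with $a_0 z$ is bounded by $\tfrac14|u|_{{}_{L^2}}^2+C|z|_{{}_{W'}}^2$ via Young's inequality; the higher-derivative and dissipative parts of $\mathcal M$ carry an extra power of $\varepsilon$ and are absorbed into the left-hand side for $\varepsilon$ small, as is the term $C\varepsilon|u|_{{}_{L^2}}^2$. What survives is exactly \eqref{kawared}.

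The main obstacle is the system-versus-scalar character of the Goodman step: unlike the classical viscous-shock case, the convection quadratic form $-z^{*}(a_0\bar A)'z$ is coercive only along the principal direction $r_0$, so the transverse modes of $z$ (and the corresponding part of $u$) must be recovered by a genuinely different mechanism --- the spectral gap of {\sf B3} and the near-block structure of the reduced viscous system --- and every commutator and every Chapman--Enskog remainder has to be kept of strictly lower order (an extra power of $\varepsilon$ or an extra derivative) so that nothing prevents closing the estimate. Arranging the geometry of {\sf C} so that the coercive term carries the correct sign $\sgn(\eta)$, and tracking carefully which error terms land on which side of \eqref{kawared}, is where the real work lies.
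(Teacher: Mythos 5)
Your reduction to an integrated viscous equation for $z$, and your use of the full-system Kawashima bounds to absorb the Chapman--Enskog remainders, both match the paper. But the core mechanism you propose for this proposition is the wrong one, and it leaves the essential term unproven: nothing in your argument actually produces $|u|_{{}_{L^2}}^2=|z'|_{{}_{L^2}}^2$ on the left-hand side. Pairing the integrated equation with $a_0 z$ gives $\Real\lambda|z|_{{}_{L^2}}^2$, a convection term of size $O(|z|_{{}_{W'}}^2)$, and a viscous term $\Real\langle z',a_0 b_0 z'\rangle$ that by {\sf B1} is only \emph{nonnegative}, not coercive --- the Chapman--Enskog viscosity $b_0$ is degenerate in general. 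Your two substitutes for coercivity both fail: (i) {\sf B2} does not say the ``effective viscosity is positive'' on the principal field; it asserts the existence of a skew-symmetric compensator $k$ with $\Real(k\,\diff f^*-a_0b_0)>0$, and to exploit it one must perform a \emph{second, separate} pairing of $k$ applied to the $z$-equation against $z'$ and add it to the Friedrichs pairing --- this is exactly the second estimate of the paper's Lemma \ref{lemma:friedRed}, and it is the only place $|z'|_{{}_{L^2}}^2$ is generated; (ii) your ``separate hyperbolic/elliptic estimate'' for the transverse modes, if carried out, bounds $|z_{II}'|$ by $|\lambda|\,|z_{II}|$ plus higher derivatives, and the resulting term $|\lambda|^2|z|_{{}_{L^2}}^2$ is controlled neither by $\Real\lambda|z|_{{}_{L^2}}^2$ (since $\lambda$ may be nearly imaginary, $|\Imag\lambda|\le C\varepsilon$ only) nor by $|z|_{{}_{W'}}^2$ (the unweighted norm is not dominated by the weighted one).

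There is also a structural misalignment: you invoke {\sf B3}, hypothesis {\sf C}'s alignment of $W'$ with $r_0$, and the Goodman coercivity of $-\tfrac12(a_0\bar A)'$ along the principal direction. Those ingredients belong to Proposition \ref{goodprop}, where $|z|_{{}_{W'}}^2$ must appear on the \emph{left} (via the weights $\rho_\pm$ of \eqref{a4.23}); the present proposition assumes only {\sf B1--B2} and {\sf C}, places $|z|_{{}_{W'}}^2$ on the \emph{right}, and needs no sign information on the convection term at all --- the paper simply dumps $|\langle z,\diff a\,W'z\rangle|\le C|z|_{{}_{W'}}^2$ into the right-hand side. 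The fix is to replace your single Goodman pairing by the pair of estimates of Lemma \ref{lemma:friedRed} (Friedrichs pairing with $z$, plus compensator pairing with $z'$), sum them, and use \eqref{rskcondition} together with $\langle z',b_0z'\rangle\ge0$ to extract $c|z'|_{{}_{L^2}}^2$; the remainders $\varepsilon^2|\hat v|^2+\varepsilon^{-1}|w'|^2\le C\varepsilon|u|_{{}_{L^2}}^2$ are then absorbed exactly as you indicate.
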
  
  
Thanks to (\ref{kawared}), it is sufficient to control
the weighted norm $|z|_{{}_{W'}}$.
  
\begin{proposition}\label{goodprop}
Under assumptions  {\sf B1--B2--B3} and {\sf C},
for $\varepsilon$  sufficiently small, there holds:
\begin{equation}\label{good}
   \Real\lambda|z|_{{}_{L^2}}^2+|z|_{{}_{W'}}^2
      \leq C\bigl(\varepsilon|u|_{{}_{L^2}}^2+\varepsilon|\Pi w|_{{}_{L^2}}^2
   +\varepsilon^{-1}|w'|_{{}_{L^2}}^2\bigr)
    \quad \textrm{\it (Goodman estimate)}
  \end{equation}
where $\Pi:=\Pi_++\Pi_-$.
\end{proposition}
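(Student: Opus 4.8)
The plan is to carry out a Goodman-type weighted energy estimate on the integrated reduced system for $z$, exploiting the genuine nonlinearity assumption {\sf B3} and the precise asymptotics {\sf C} of the profile derivative. First I would produce, from the approximate Chapman--Enskog expansion sketched in the text, an effective second-order scalar-principal system for $u = z'$ of the form $\lambda z' + (a\,z')' - (b\,z')'' = (\text{error})$, where at leading order the principal part is governed by the scalar equilibrium characteristic $\alpha_0$ and viscosity $\eta$-type coefficient along the direction $r_0$, and where the error terms are higher-derivative and dissipative quantities (the $\hat v$ of Section \ref{sec:redsystem}) that will later be absorbed via \eqref{kawa}. The key algebraic input is that, by {\sf C}, $W'/|W'| \approx -\sgn(\eta)\,r_0$ up to $O(\varepsilon^2)$, so that the profile equation for the reduced system reads, to leading order, $a_0\, f^*{}' (W) \approx$ a rank-one-dominated expression aligned with $r_0$, and the scalar genuine-nonlinearity structure $\eta \ne 0$ guarantees the sign needed for the weighted estimate.

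Next I would introduce the Goodman weight. Following the classical Goodman/Matsumura--Nishihara strategy adapted to the integrated variable (as in \cite{HuZu02}), I would pair the integrated reduced equation against $a_0(W)\,z$ (or the appropriate symmetrizer-weighted test function) and integrate by parts. The term $\int (a\,z')'\cdot z$, after integration by parts and using that $a' = (a_0 f^*{}')'(W) \sim W' \sim |W'|\,(-\sgn\eta\, r_0)$, produces the good term $\int \eta |W'| |z\cdot r_0|^2$ — here the sign of $\eta$ combined with $\sgn(\eta)$ is exactly what makes this term have a definite sign, yielding $c\,|z|_{{}_{W'}}^2$ modulo contributions from the transverse components of $z$ and from $O(\varepsilon)$-errors. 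The $\Real\lambda$ term gives $\Real\lambda\,\langle a_0 z, z\rangle \geq c\,\Real\lambda\,|z|_{{}_{L^2}}^2$ by positive definiteness of $a_0$. The viscosity term $-\int (b\,z')''\cdot z = \int (b\, z')'\cdot z' $ is nonnegative (up to $O(\varepsilon)$) by $\Real a_0 b_0 \ge 0$ from {\sf B1}, and in any case contributes a term controlled by $\varepsilon^{-1}|w'|_{{}_{L^2}}^2$ on the right once one notes $u = z'$ and that the genuinely second-order part only sees $w'$-scale quantities. The transverse components of $z$ (those orthogonal to $r_0$) are handled by {\sf B3}: since the other characteristics are bounded away from zero by $2c_0$, a standard hyperbolic-type estimate on those components closes them in terms of $|z|_{{}_{W'}}^2$ with a small constant, or feeds into the good term. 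The remaining error terms from the Chapman--Enskog truncation are, by construction, higher-derivative/dissipative and bounded by $\varepsilon|\Pi w|_{{}_{L^2}}^2 + \varepsilon^{-1}|w'|_{{}_{L^2}}^2$, possibly after one more integration by parts to move derivatives off $z$.

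Collecting terms yields $\Real\lambda\,|z|_{{}_{L^2}}^2 + c\,|z|_{{}_{W'}}^2 \le C\varepsilon\,|z|_{{}_{W'}}^2 + C(\varepsilon|u|_{{}_{L^2}}^2 + \varepsilon|\Pi w|_{{}_{L^2}}^2 + \varepsilon^{-1}|w'|_{{}_{L^2}}^2)$, and absorbing the $C\varepsilon|z|_{{}_{W'}}^2$ on the left for $\varepsilon$ small gives \eqref{good}. I expect the main obstacle to be the bookkeeping of the Chapman--Enskog error terms: one must verify that every error produced by truncating the expansion genuinely carries either an extra derivative (hence is controllable by $\varepsilon^{-1}|w'|_{{}_{L^2}}^2$) or an extra power of $\varepsilon$ together with a dissipative factor (hence absorbable into $\varepsilon|\Pi w|_{{}_{L^2}}^2$ or $\varepsilon|u|_{{}_{L^2}}^2$), and in particular that no term of size $|z|_{{}_{W'}}^2$ with the wrong sign or a non-small constant survives. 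A secondary delicate point is ensuring the alignment estimate \eqref{dirder} is used sharply enough: the cross terms between the $r_0$-component and transverse components of $z$ in the good term must be controlled with constants that do not destroy the positivity, which is where the $O(\varepsilon^2)$ rather than merely $O(\varepsilon)$ accuracy in {\sf C} is essential.
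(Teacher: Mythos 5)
Your overall strategy is the right one (Chapman--Enskog reduction to the integrated variable $z$, a good $W'$-weighted term generated by genuine nonlinearity through \eqref{gnl} and \eqref{dirder}, errors absorbed later via \eqref{kawa}), and your treatment of the principal scalar mode along $r_0$ matches the paper's: the sign of $\diff\alpha_0\,W'\le -C|W'|$ is exactly what drives the estimate. However, there is a genuine gap in how you handle the components of $z$ transverse to $r_0$. You pair the equation against $a_0(W)\,z$ and assert that the fast characteristics, being bounded away from zero by {\sf B3}, are closed by ``a standard hyperbolic-type estimate.'' This does not work as stated: integration by parts produces the full quadratic form $-\tfrac12\langle z,\diff(a_0 a)\,W'\,z\rangle$, and only its component along $r_0$ has a definite sign from genuine nonlinearity; the blocks corresponding to the outgoing/incoming fast modes, and the cross terms, have no sign whatsoever --- the derivative of a nonzero eigenvalue along the profile can have either sign. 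The paper's proof supplies the missing mechanism in two pieces that your sketch omits. First, Lemma \ref{special} constructs an approximate diagonalization $\ell, r$ with $\ell a r=\diag(\alpha_-,\alpha_p,\alpha_+)$ and, crucially, $(\ell r')_{pp}=0$, so that the commutator term $\langle\rho\zeta,\alpha\,\ell\,r'\zeta\rangle$ arising from the $x$-dependence of the diagonalization involves only the fast components $\hat\zeta$. Second, the genuinely ``Goodman'' ingredient: exponential weights $\rho_\pm$ solving $\rho_\pm'=\mp M|W'|c^{-1}\rho_\pm$, whose derivatives, combined with the definite signs $\alpha_-\le -c<0<c\le\alpha_+$, manufacture a good term $M\int|\hat\zeta|^2|W'|$ for the fast modes; taking $M$ large then absorbs both the signless fast-mode contributions and the commutator term. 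You mention ``introducing the Goodman weight'' but never actually use a weight in the energy identity, and without it the transverse part of $|z|_{{}_{W'}}^2$ cannot be recovered on the left-hand side of \eqref{good}.

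A secondary inaccuracy: the near-nonnegativity of the viscosity term after the change of variables is not immediate from $\Real a_0 b_0\ge 0$ alone; one needs the transformed quantity $\Real\,\ell\,b_0\,r\ge -C\varepsilon$, which is \eqref{approxpos} and relies on $\ell,r$ being $O(\varepsilon)$-perturbations of an orthogonal conjugation --- another output of Lemma \ref{special} that your argument would need to reproduce.
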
  
 
Theorem \ref{thm:spst} is an immediate consequence of  
Propositions \ref{prop:kawa}, \ref{prop:kawareduced}
and \ref{goodprop}.
 
\begin{proof}[Proof of Theorem \ref{thm:spst}.]
Combining \eqref{kawa} and (\ref{good}), we obtain
\begin{equation*}
   |z|_{{}_{W'}}^2 \leq C\,\varepsilon\,|u|_{{}_{L^2}}^2.
\end{equation*}
With (\ref{kawared}), this gives
 \begin{equation*}
    |u|^2_{{}_{L^2}}  \leq C\,|z|_{{}_{W'}}^2
    \leq C\,\varepsilon^2\,|u|^2_{{}_{L^2}},
\end{equation*}
showing that, 
if $\varepsilon=|W_+-W_-|$ is small enough, $w\equiv 0$.
\end{proof}

\subsection*{Discussion and open problems}

We remark briefly on the setting of these results.  
Small-amplitude existence
and stability were shown in \cite{YoZu00,MaZu02} and \cite{PlZu04}
under the additional noncharacteristicity assumption 
$\det \diff F\ne0$,
or, equivalently, the condition that characteristic speeds of the
background system do not vanish relative to the shock speed.
This hypothesis suffices to treat simple model problems such
as the Broadwell or Jin--Xin equations.  However, as discussed
in \cite{MaZu05}, it is unrealistic
for models derived by discretization or moment closure from 
kinetic equations, since these may possess characteristics of
any speed.
Thus, it is highly desirable to remove this technical hypothesis,
as we do here.
The combination of Goodman- and Kawashima-type energy estimates was
used in \cite{HuZu02} to treat stability of viscous
shock profiles for systems with real viscosity.
A similar, but more complicated argument combining these ingredients
was used in \cite{LiYu04} to treat stability of Boltzmann profiles. 
These results motivate the present analysis, which 
essentially interpolates between the two. 

Interesting open problems are verification of linearized and
nonlinear stability in the same setting, assuming spectral stability, 
and the direct verification of {\sf C} using stability estimates
together with known bounds on the profile for the reduced system
following the philosophy set out in \cite{LiYu04}.

\subsection*{Notations and (very) basic tools}

Given $w_1, w_2\,:\,\R\to\C^n$, we denote by $\langle w_1, w_2\rangle$ the 
scalar product defined as follows
\begin{equation*}
 \langle w_1, w_2\rangle:=\int_{\R} \overline{w_1(x)}\cdot w_2(x)\,dx
\end{equation*}
where $\bar w$ denotes the complex conjugate vector of $w$.
Given $A$, $n\times n$ matrix with complex entries, there
holds
\begin{equation*}
 \Real \langle w, A w\rangle=\frac{1}{2}\left(\langle w, A w\rangle
 +\overline{\langle w, A w}\rangle\right)
 =\langle w, A^* w\rangle
\end{equation*}
where $A^*:=(A+\overline{A^t})/2$.
 
If $S\,:\,\R\to\R^{n\times n}$ is such that $S(x)$ is symmetric for any $x$, then
\begin{equation*}
 \langle w, S\,w'\rangle=\int_{\R} \overline{w}\cdot Sw'\,dx
 =-\int_{\R} \overline{Sw'}\cdot w\,dx-\int_{\R} \overline{w}\cdot S'\,w\,dx
 =-\overline{\langle w, S\,w'\rangle}-\langle w, S'\,w\rangle
\end{equation*}
Hence
\begin{equation}\label{symm}
 \Real\langle w, S\,w'\rangle=-\frac12\langle w, S'\,w\rangle.
\end{equation}
Similarly, if $K$ is skew--symmetric, then
\begin{equation}\label{skewsymm}
\Imag\,\langle w',Kw\rangle=-\frac12 \langle w,K'\,w\rangle
\end{equation}
In particular, if $K$ is constant, $\langle w',Kw\rangle$ is a 
real number.
\medskip

From here on, we will denote with $O(1)$ any 
function of $x, W$ and $\lambda$, locally 
bounded  in $\{(x, W,\lambda)\,:\,\Real\lambda\geq 0\}$.
As a consequence, given the functions $f,g\in L^2$
and $h\in L^2_{{}_{W'}}$, the following estimates
hold
 \begin{equation}\label{chromo}
 |\langle f,O(1)\,W'g\rangle|
 \leq C|W'|_{{}_{L^\infty}}\left( \eta|f|_{{}_{L^2}}^2
  +\eta^{-1}|g|_{{}_{L^2}}^2\right)
 \end{equation}
 \begin{equation}\label{doris}
 |\langle h,O(1)\,W'g\rangle|
 \leq C\left(\eta|h|_{{}_{W'}}^2
  +\eta^{-1}|W'|_{{}_{L^\infty}}|g|_{{}_{L^2}}^2\right)
 \end{equation}
where $\eta$ is any strictly positive constant
and $C$ is a constant independent on $\eta$.
 
\section{Estimates for the full system}

\begin{lemma}\label{lemma:fried}
Let $\varepsilon:=|W_+-W_-|$ and
assume hypothesis {\sf A1} and {\sf C}. 
Let $\lambda\in\C$ such that $\Real\lambda\geq 0$
let $w$ be a solution of (\ref{resolvent})
and let $K$ be any constant skew-symmetric matrix.
Then for $\varepsilon, \eta>0$ both sufficiently small, there hold
\begin{eqnarray}
 \label{fried0eps}
    & \Real\,\lambda\, |w|^2_{{}_{L^2}}+|\Pi\,w|^2_{{}_{L^2}}
   \leq  C\,\varepsilon^2\,|w|^2_{{}_{L^2}},\\
 \label{fried1eps}  
   &  \Real\lambda |w'|^2_{{}_{L^2}}
    -\Real\langle w',A_0\,\diff Q\,w'\rangle
    \leq C\,\varepsilon^2\,|w|_{{}_{H^1}}^2;\\
 \label{interKawa}
   &  \Real\,\langle w',K\,\diff F\,w'\rangle 
    \leq  C\,\bigl(\varepsilon^2|w|^2_{{}_{L^2}}
   +\eta^{-1}\,|\Pi w|^2_{{}_{L^2}}
   +(\varepsilon^2+\eta)|w'|^2_{{}_{L^2}}\bigr)
\end{eqnarray}
where $\Pi:=\Pi_++\Pi_-$ and $C$ denotes a constant 
independent on $\varepsilon$ and $\eta$.
\end{lemma}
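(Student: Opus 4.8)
The plan is to take the scalar product of the resolvent equation (\ref{resolvent}) against suitable test functions built from $w$, its derivative $w'$, and the symmetrizer $A_0$, and then integrate by parts using the algebraic identities (\ref{symm})--(\ref{skewsymm}), exploiting assumption {\sf C} to control the commutator terms produced by differentiating the $x$-dependent coefficients $A_0(W)$, $\diff F(W)$, $\diff Q(W)$. Observe that whenever a derivative falls on one of these coefficients we pick up a factor $W'$, which by (\ref{sizeder}) is of size $O(\varepsilon^2)$ in $L^\infty$, so such commutator terms are automatically $O(\varepsilon^2)$ and can be absorbed using (\ref{chromo}).

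For (\ref{fried0eps}): pair (\ref{resolvent}) with $A_0 w$ and take the real part. The term $\Real\langle A_0 w,\lambda w\rangle = \Real\lambda\,\langle w,A_0 w\rangle \ge (\Real\lambda)\,c_0|w|_{L^2}^2$ by positive-definiteness of $A_0$. The flux term $\Real\langle A_0 w,(\diff F(W)w)'\rangle$ is handled by writing $(\diff F w)' = (\diff F)w' + (\diff F)'w$; since $A_0\diff F$ is symmetric by {\sf A1}, the leading piece becomes $\Real\langle w, (A_0\diff F)w'\rangle = -\tfrac12\langle w,(A_0\diff F)'w\rangle$ by (\ref{symm}), which is $O(\varepsilon^2)|w|_{L^2}^2$ because $(A_0\diff F)'$ carries a factor $W'$; the remaining commutator $\langle A_0 w,(\diff F)'w\rangle$ is likewise $O(\varepsilon^2)|w|_{L^2}^2$. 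The reaction term gives $-\Real\langle w,(A_0\diff Q)(W)w\rangle$; writing $(A_0\diff Q)(W) = (A_0\diff Q)(W_\pm) + O(|W-W_\pm|)$ and using (\ref{propA0}) at $x\to\pm\infty$ together with the fact that $|W-W_\pm|$ is small, this is bounded above by $-c|\Pi w|_{L^2}^2 + C\varepsilon^2|w|_{L^2}^2$ (with a minor technicality in patching the two far-field estimates, handled as in \cite{MaZu02} by a partition along the profile). Collecting terms yields (\ref{fried0eps}).

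For (\ref{fried1eps}) and (\ref{interKawa}): differentiate (\ref{resolvent}) once in $x$ and pair with $A_0 w'$, respectively with $K\,w'$ where $K$ is constant skew-symmetric, again taking real parts. Differentiating the equation produces an extra $W'$-weighted term $(\diff F)''(W')^2 w + 2(\diff F)'W'w' + \dots$ on the flux side and $(\diff Q)'W'w$ on the reaction side; by (\ref{sizeder}) these are all $O(\varepsilon^2)$ relative to $|w|_{H^1}^2$ via (\ref{chromo}). For (\ref{fried1eps}) the structure is parallel to the previous step: the $\lambda$ term gives $\Real\lambda\,\langle w',A_0 w'\rangle \gtrsim \Real\lambda|w'|_{L^2}^2$, the symmetric flux term integrates by parts to an $O(\varepsilon^2)$ remainder, and the reaction term produces exactly $-\Real\langle w',A_0\diff Q\,w'\rangle$ up to $O(\varepsilon^2)|w|_{H^1}^2$. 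For (\ref{interKawa}), pairing against $K w'$ and using skew-symmetry of $K$ (so that by (\ref{skewsymm}) the $\lambda$-term is purely imaginary and the constant-$K$ commutator vanishes), one isolates $\Real\langle w',K\diff F\,w'\rangle$ modulo the reaction contribution $\Real\langle w',K\diff Q\,w\rangle$; the latter is a zero-order-times-first-order pairing, split by Young's inequality as $\eta^{-1}|K\diff Q\,w|^2 + \eta|w'|^2$, and since $\diff Q$ factors through the projection structure of {\sf A1} we may estimate $|K\diff Q\,w|_{L^2} \le C|\Pi w|_{L^2}$ up to $O(\varepsilon)$ far-field corrections, producing the $\eta^{-1}|\Pi w|_{L^2}^2$ and $(\varepsilon^2+\eta)|w'|_{L^2}^2$ terms as stated.

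The main obstacle I anticipate is bookkeeping the reaction term accurately: the dissipativity estimate (\ref{propA0}) holds only at the endpoints $W_\pm$, whereas along the profile $\diff Q(W)$ differs from $\diff Q(W_\pm)$ by $O(|W-W_\pm|)$, which is \emph{not} uniformly small — it is only small near $\pm\infty$. The standard fix is to split $\R = (-\infty,-M]\cup[-M,M]\cup[M,\infty)$, use (\ref{propA0}) with the appropriate sign on the two tails where $|W-W_\pm|$ is genuinely small, and on the compact middle piece absorb everything into $C\varepsilon^2|w|_{L^2}^2$ using that the whole profile variation is $O(\varepsilon)$ and that $W'$, hence the profile's departure from being locally constant, is $O(\varepsilon^2)$; one also needs the elementary inequality $|\Pi_\pm w|^2 \le 2|\Pi w|^2 + O(\varepsilon^2)|w|^2$ to reconcile the one-sided projections $\Pi_\pm$ with the symmetric combination $\Pi = \Pi_+ + \Pi_-$. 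Beyond this, the argument is a routine integration-by-parts exercise driven entirely by the two smallness bounds in (\ref{sizeder}).
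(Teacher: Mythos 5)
Your overall skeleton --- pairing the resolvent equation with $A_0w$, then differentiating and pairing with $A_0w'$ and $Kw'$, using \eqref{symm}--\eqref{skewsymm}, and absorbing the $W'$-weighted commutators via \eqref{sizeder} --- is exactly the paper's, and your treatment of \eqref{fried1eps} and of all flux/commutator terms is fine. The genuine gap is in the zeroth-order reaction term of \eqref{fried0eps}. Expanding $(A_0\,\diff Q)(W)$ about a single endpoint gives an error $O(|W-W_\pm|)$, and on the transition region \emph{both} $|W-W_-|$ and $|W-W_+|$ are of size $\varepsilon$, not $\varepsilon^2$: under {\sf C} the profile turns over a length scale of order $\varepsilon^{-1}$, so integrating $|W'|=O(\varepsilon^2)$ only recovers $O(\varepsilon)$ for the deviation from either endpoint. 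Your partition therefore leaves an unabsorbable term $C\varepsilon\,|w|_{{}_{L^2}}^2$ on the middle piece; it cannot be hidden in $-c|\Pi w|_{{}_{L^2}}^2$ because $\Pi$ annihilates the conserved components, and getting $C\varepsilon|u|^2_{{}_{L^2}}$ in place of $C\varepsilon^2|u|^2_{{}_{L^2}}$ in \eqref{kawa} destroys the final bootstrap: the $\varepsilon^{-1}|w'|^2_{{}_{L^2}}$ term in the Goodman estimate \eqref{good} would then return as $O(1)|u|^2_{{}_{L^2}}$ with no smallness.

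The paper's fix is a two-point device: it sets $\Phi(W):=\varepsilon^{-1}|W-W_+|\,(A_0\,\diff Q)(W_-)+\varepsilon^{-1}|W-W_-|\,(A_0\,\diff Q)(W_+)$. This is a nonnegative combination of the two endpoint matrices with weights summing to at least one, so it inherits the dissipativity \eqref{propA0} (up to the harmless $O(\varepsilon)$ discrepancy between $\Pi_+$ and $\Pi_-$ that you already noted), while the linear-interpolation error satisfies $|\Phi(W)-(A_0\,\diff Q)(W)|\leq C|W-W_-|\,|W-W_+|$, which is \emph{quadratic} and hence uniformly $O(\varepsilon^2)$ --- precisely the product structure that a one-sided Taylor expansion misses. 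The analogous $\Psi(W)$ is used for the $K\,\diff Q$ term in \eqref{interKawa}, although there your Young-inequality splitting would in fact survive with only an $O(\varepsilon)$ expansion error, since the loss lands on the $\eta^{-1}\varepsilon^2|w|^2_{{}_{L^2}}$ term. To repair your proof, replace the tail/middle partition by this interpolation.
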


\begin{proof}
Taking the scalar product of  $A_0(W)\,w$ against (\ref{resolvent}), 
we obtain
 \begin{equation}\label{A0weq}
   \lambda\langle A_0\,w,w\rangle+\langle A_0\,w,\bigl(\diff F\,w\bigr)'\rangle
    -\langle A_0\,w,\diff Q\,w\rangle =0.
 \end{equation}
Hence, using (\ref{symm}), we get
 \begin{equation*}
    \Real\lambda\langle A_0\,w,w\rangle
   -\Real\langle w,A_0\,\diff Q\,w\rangle\leq
   -\Real\langle A_0\,w, \diff^2 F\,W'\,w\rangle
   +\frac12\Real\langle w,\diff(A_0\,\diff F)\,W'\,w\rangle.
 \end{equation*}
Since $A_0$ is positive definite, there holds for some $C>0$
 \begin{equation}\label{discodoris}
  \Real\lambda\, |w|^2_{{}_{L^2}}-\Real\langle w,A_0\,\diff Q\,w\rangle
  \leq C\,|W'|_{{}_{L^\infty}}|w|^2_{{}_{L^2}}.
 \end{equation}
 Let us set 
\begin{equation*}
 \Phi(W):=\frac{|W-W_+|}{|W_+-W_-|}(A_0\,\diff Q)(W_-)
  +\frac{|W-W_-|}{|W_+-W_-|}(A_0\,\diff Q)(W_+).
\end{equation*}
Then there holds, for some $C>0$, 
\begin{equation*}
 |\Phi(W)-(A_0\,\diff Q)(W)|\leq C|W-W_-||W-W_+|.
\end{equation*}
Therefore
\begin{equation*}
 \Real \langle w,A_0\,\diff Q\,w\rangle
 \leq \Real\langle w, \Phi(W)\,w\rangle
 +C|W-W_-|_{{}_{L^\infty}}|W-W_+|_{{}_{L^\infty}}|w|_{{}_{L^2}}^2
\end{equation*}
Thanks to (\ref{propA0}), we get 
\begin{equation*}
 \Real \langle w,A_0\,\diff Q\,w\rangle
 \leq -c|\Pi w|_{{}_{L^2}}^2
 +C|W-W_-|_{{}_{L^\infty}}|W-W_+|_{{}_{L^\infty}}|w|_{{}_{L^2}}^2
\end{equation*}
for some $C,c>0$.
Hence, using (\ref{discodoris}), we obtain
 \begin{equation}
   \textrm{Re}\,\lambda\, |w|^2_{{}_{L^2}}+|\Pi\,w|^2_{{}_{L^2}}
   \leq C\,\bigl(
   |W-W_+|_{{}_{L^\infty}}|W-W_-|_{{}_{L^\infty}}
   +|W'|_{{}_{L^\infty}}\bigr)|w|^2_{{}_{L^2}}.
 \label{fried0}
 \end{equation}
In term of $\varepsilon$, we get the 
{\it 0-th order Friedrichs estimate} (\ref{fried0eps}).

Differentiating (\ref{resolvent}) with respect to $x$,
\begin{equation}
    \lambda\,w'+\bigl(\diff F(W) w\bigr)'' -(\diff Q(W) w)'=0. 
  \label{diffresolvent}
  \end{equation}
Taking the scalar product of $A_0(W)\,w'$ 
against (\ref{diffresolvent}), we get
\begin{equation}\label{atromaculata}
   \lambda\langle A_0\,w',w'\rangle-\langle w',A_0\,\diff Q\,w'\rangle=
  -\langle A_0\,w',\bigl(\diff F\,w\bigr)''\rangle
  +\langle A_0\,w',\diff^2Q\,W'\,w\rangle.
\end{equation}
Since
\begin{equation*}
 \langle A_0\,w',\bigl(\diff F\,w\bigr)''\rangle
 = \langle A_0\,w',\diff^3F\,W'\,W'\,w\rangle
 + \langle A_0\,w',\diff^2F\,W''\,w\rangle
 \end{equation*}
\begin{equation*}
 \qquad\qquad\qquad 
 +2\langle A_0\,w',\diff^2F\,W'\,w'\rangle
  +\langle w',A_0\,\diff F\,w''\rangle,
 \end{equation*}
taking the real part and using (\ref{symm}), we obtain 
\begin{equation*}
\Real\langle A_0\,w',\bigl(\diff F\,w\bigr)''\rangle
  = \langle w',O(1)W'\,w\rangle
  + \langle w',O(1)W''\,w\rangle
 \end{equation*}
\begin{equation*}
 \qquad\qquad\qquad 
    +\langle w',O(1)W'\,w'\rangle
   -\frac12\langle w',\diff(A_0\,\diff F)\,W'\,w'\rangle.
\end{equation*}
Hence, the following estimates holds
\begin{equation*}
   \bigl|\Real\langle A_0\,w',\bigl(\diff F\,w\bigr)''\rangle\bigr|
    \leq C\left(|W'|_{{}_{L^\infty}}+|W''|_{{}_{L^\infty}}\right)|w|_{{}_{H^1}}^2
\end{equation*}
\begin{equation*}
   \bigl|\Real\langle A_0\,w',\diff^2Q\,W'\,w\rangle\bigr|
    \leq C\,|W'|_{{}_{L^\infty}}|w|_{{}_{H^1}}^2.
\end{equation*}
Therefore, from (\ref{atromaculata}), using (\ref{sizeder}), we deduce
\begin{equation*}
   \Real\lambda |w'|^2_{{}_{L^2}}-\Real\langle w',A_0\,\diff Q\,w'\rangle
   \leq C\,|W'|_{{}_{L^\infty}}\,|w|_{{}_{H^1}}^2.
\end{equation*}
 Thus in term of $\varepsilon$, we obtain the 
 {\it 1-st order Friedrichs estimate} (\ref{fried1eps}).
 
Now, let $K$ be any constant skew-symmetric matrix.
Applying $K$ to the resolvent equation (\ref{resolvent}) and 
multiplying by $w'$, we get
\begin{equation*}
   \lambda\,\langle w',Kw\rangle+\langle w',K\bigl(\diff F\,w\bigr)'\rangle
    -\langle w',K\,\diff Q\,w\rangle=0. 
\end{equation*}
Taking the real parts and rearranging the terms, we obtain
\begin{equation*}
   \Real\,\langle w',K\,\diff F\,w'\rangle=
    -\Real\bigl(\lambda\,\langle w',Kw\rangle\bigr)
    -\Real\,\langle w',K\,\diff^2 F\,W'\,w\rangle 
    +\Real\,\langle w',K\,\diff Q\,w\rangle.
\end{equation*}
Hence, thanks to (\ref{skewsymm}), there holds
$\textrm{Im}\,\langle w',Kw\rangle=0$, since $K$ is constant.
Therefore, for $\Real\lambda\geq 0$, we obtain 
\begin{equation*}
   |\Real\bigl(\lambda\,\langle w',Kw\rangle\bigr)|
   = \Real\lambda\,|\Real\, \langle w',Kw\rangle|
    \leq C\,\Real\lambda\,|w|^2_{{}_{H^1}}.
\end{equation*}
Let us set 
\begin{equation*}
 \Psi(W):=\frac{|W-W_+|}{|W_+-W_-|}K\,\diff Q(W_-)
  +\frac{|W-W_-|}{|W_+-W_-|}K\,\diff Q(W_+).
\end{equation*}
Then there holds, for some $C>0$, 
\begin{equation*}
 |\Psi(W)-K\,\diff Q(W)|\leq C|W-W_-||W-W_+|.
\end{equation*}
Therefore
\begin{equation*}
 \Real \langle w',K\,\diff Q\,w\rangle
 \leq \Real\langle w', \Psi(W)\,w\rangle
 +C|W-W_-|_{{}_{L^\infty}}|W-W_+|_{{}_{L^\infty}}|w|_{{}_{H^1}}^2
\end{equation*}
Thanks to (\ref{propA0}), we get 
\begin{equation*}
 \Real \langle w',K\,\diff Q\,w\rangle
 \leq C\,\bigl(\eta^{-1}\,|\Pi w|^2_{{}_{L^2}}
   +\eta |w'|^2_{{}_{L^2}}\bigr)
 +C|W-W_-|_{{}_{L^\infty}}|W-W_+|_{{}_{L^\infty}}|w|_{{}_{L^2}}^2
\end{equation*}
where $\eta>0$ is a positive constant to be chosen later on
(small enough).
Hence, the following estimate holds
\begin{equation*}
 \begin{aligned}
   \Real\,\langle w',K\,\diff F\,w'\rangle=
    C\,\bigl\{(\Real\lambda+|W'|_{{}_{L^\infty}}
     +|W-W_-|_{{}_{L^\infty}}&|W-W_+|_{{}_{L^\infty}})|w|^2_{{}_{H^1}}\\
    &+\eta^{-1}\,|\Pi w|^2_{{}_{L^2}}
   +\eta |w'|^2_{{}_{L^2}}\bigr\}
\end{aligned}   
\end{equation*}
By (\ref{fried0eps}), $\textrm{Re}\lambda\leq C\, \varepsilon^2$,
hence, in term of $\varepsilon$, we get (\ref{interKawa}).
This concludes the proof of Lemma \ref{lemma:fried}.
\end{proof}

Assuming, in addition, hypothesis {\sf A2}, we 
prove estimates (\ref{kawa}) and (\ref{evalueEst}).

\begin{proof}[Proof of Proposition \ref{prop:kawa}]
Thanks to the small amplitude assumption, it is possible
to choose $K=K(W_+)$ constant in the Shizuta--Kawashima condition
\eqref{skcondition}, since this is an open condition so persists
under small perturbations.
Hence, summing estimates (\ref{fried0eps})-(\ref{fried1eps}) 
with (\ref{interKawa}),  we obtain
\begin{equation*}
   \Real\,\lambda\, |w|^2_{{}_{L^2}}+|\Pi\,w|^2_{{}_{L^2}}
   +|w'|^2_{{}_{L^2}}\leq C\,\varepsilon^2\,|w|_{{}_{H^1}}^2
   +C\,(1+\eta^{-1})\varepsilon^2|w|^2_{{}_{L^2}}
   +C(\varepsilon^2+\eta)|w'|^2_{{}_{L^2}}
   \end{equation*}
 which yields 
\begin{equation*}
   \Real\,\lambda\, |w|^2_{{}_{L^2}}+|\Pi\,w|^2_{{}_{L^2}}
   +|w'|^2_{{}_{L^2}}
   \leq C\,\varepsilon^2\,|w|_{{}_{L^2}}^2
   \end{equation*}
for $\varepsilon$ and $\eta$ sufficiently small.
Since $|w|_{{}_{L^2}}^2\leq C\bigl(|u|_{{}_{L^2}}^2+|\Pi\, w|_{{}_{L^2}}^2\bigr)$ 
we get the estimate \eqref{kawa} for $\varepsilon$ small.

Estimate  (\ref{fried0eps}) implies the bound on the real
part of the eigenvalue $\lambda$.
Taking the imaginary part of (\ref{A0weq}), 
\begin{equation*}
 \Imag\bigl(\lambda\,\langle A_0\,w,w\rangle\bigr)
  =-\Imag\langle A_0\,w,\diff^2F\,W'\,w\rangle
 -\Imag\langle A_0\,w,\diff F\,w'\rangle+\Imag\langle A_0\,w,\diff Q\,w\rangle.
\end{equation*}
Hence, for $\eta>0$ to be chosen, 
\begin{equation*}
 |\Imag\lambda|\,|w|_{{}_{L^2}}^2
  \leq C\left(|W'|_{{}_{L^\infty}}\,|w|_{{}_{L^2}}^2
 +\eta\,|w|_{{}_{L^2}}^2+\eta^{-1}|w'|_{{}_{L^2}}^2
 +\eta^{-1}|\Pi w|_{{}_{L^2}}^2\right).
\end{equation*}
Thanks to (\ref{sizeder}) and (\ref{kawa}), we get 
\begin{equation*}
 |\Imag\lambda|\,|w|_{{}_{L^2}}^2\leq C\left(\varepsilon^2+\eta
 +\eta^{-1}\varepsilon^2\right)\,|w|_{{}_{L^2}}^2.
\end{equation*}
Thus, choosing $\eta=\varepsilon$, we obtain 
the result for $\varepsilon$ small enough.
\end{proof}
 
\section{The reduced system for the conserved 
integrated variables}\label{sec:redsystem}

As stressed in the Introduction, the next step consists in estimating the
conserved densities $u$ in term of an appropriate weighted $L^2-$norm
of the conserved quantities $z$, defined by
\begin{equation}\label{zdef}
 z(x):=\int_{-\infty}^x u(y)\, dy.
\end{equation} 
The first step is to the deduce a balance law satisfied by the variable $z$
with source terms depending on $\Pi w$ and $w'$.

Setting 
\begin{equation*}
 \diff F:=\left(\begin{array}{cc}
  A_{11} & A_{12}\\  A_{21} & A_{22}
 \end{array}\right)\quad\textrm{and}\quad
 \diff Q:=\left(\begin{array}{cc}
  0 & 0\\  q_{1} & q_{2}
 \end{array}\right),
\end{equation*}
equation (\ref{resolvent}) can be rewritten as
\begin{equation*}
   \left\{\begin{aligned}
    &\lambda\,u+(A_{11}\,u+A_{12}\,v)'=0,\\
    &\lambda\,v+(A_{21}\,u+A_{22}\,v)'-q_1 u-q_2 v=0.
   \end{aligned}\right.
\end{equation*} 
Let $\hat v:=  q_2^{-1}\,q_1\,u+v$.  
In particular, 
$c_1|\hat v|_{{}_{L^2}}\leq |\Pi\,w|_{{}_{L^2}}\leq 
c_2|\hat v|_{{}_{L^2}}$ for some $c_1,c_2>0$.  
Hence, in the following, we consider the variable $\hat v$ in
place of $\Pi w$.

Then the couple $(u,\hat v)$ satisfies
\begin{equation*}
    \left\{\begin{aligned}
    &\lambda\,u+\bigl(a\,u+A_{12}\,\hat v\bigr)'=0,\\
    &(\lambda I_r-q_2)\,\hat v
     +\bigl(c\,u+A_{22}\,\hat v\bigr)'
     +q^{-1}_2 q_1\bigl(a\,u+A_{12}\,\hat v\bigr)'=0
    \end{aligned}\right.
\end{equation*}
where 
\begin{equation*}
    a:=A_{11}-A_{12} q^{-1}_2 q_1,\qquad
    c:= A_{21}-A_{22} q^{-1}_2 q_1.
\end{equation*}
With $z$ defined in (\ref{zdef}), we can write the above system as 
\begin{equation*}
    \left\{\begin{aligned}
    &\lambda\,z+a\,z'+A_{12}\,\hat v=0,\\
    &(\lambda I_r-q_2)\,\hat v
     +\bigl(c\,z'+A_{22}\,\hat v\bigr)'
     +q^{-1}_2 q_1\bigl(a\,z'+A_{12}\,\hat v\bigr)'=0
    \end{aligned}\right.
\end{equation*}
Next the idea is to obtain an expression for $\hat v$ from the
second equation and inserting it in the first one, in order to obtained
a reduced system of viscous conservation laws with source terms.
Since we want to derive energy estimates, it is useful to change variables
in the first equation in order to symmetrize the term containing the first order
derivative.

Let $a_0$ be a symmetric and positive definite matrix such that $a_0a$ is
symmetric, as in assumption {\sf B1}.
Let $\tilde z:=a_0^{1/2} z$. 
The new variable $\tilde z$ and the variable $\hat v$ satisfy
\begin{equation*}
    \left\{\begin{aligned}
    &\lambda\,\tilde z+a_0^{1/2}\,a\,(a_0^{-1/2}\,\tilde z)'+a_0^{1/2}\,A_{12}\,\hat v=0,\\
    &(\lambda I_r-q_2)\,\hat v
     +\bigl(c\,(a_0^{-1/2}\,\tilde z)'+A_{22}\,\hat v\bigr)'
     +q^{-1}_2 q_1\bigl(a\,(a_0^{-1/2}\,\tilde z)'+A_{12}\,\hat v\bigr)'=0
    \end{aligned}\right.
\end{equation*}
Hence
\begin{equation*}
    \left\{\begin{array}{l}
    \lambda\,\tilde z+\tilde a\,\tilde z'+a_0^{1/2}\,A_{12}\,\hat v
      =O(1)\,W'\,\tilde z',\\
    (\lambda I_r-q_2)\,\hat v
     +\bigl(c\,a_0^{-1/2}\,\tilde z'+O(1)W'\tilde z+O(1)\hat v\bigr)'\\
     \qquad\qquad\quad
     +q^{-1}_2 q_1\bigl(a\,a_0^{-1/2}\,\tilde z'+O(1)W'\tilde z
      +O(1)\hat v\bigr)'=0
    \end{array}\right.
\end{equation*}
where the matrix $\tilde a:=a_0^{1/2}\,a\,a_0^{-1/2}$ is symmetric.
From the second equation, using (\ref{sizeder}), we get
\begin{equation}\label{tildev}
    \hat v=-(\lambda I_r-q_2)^{-1}\bigl(c+q^{-1}_2 q_1a\bigr)a_0^{-1/2}\,\tilde z''
       +O(1)W'\bigl(\tilde z'+\varepsilon\tilde z+\hat v\bigr)+O(1)\hat v',
\end{equation}
or, equivalently, using the $O(1)$ notation, 
\begin{equation}\label{shortHatv}
   \hat v=O(1)W'\bigl(\tilde z'+\varepsilon\tilde z+\hat v\bigr)+O(1)w'.
 \end{equation}
Plugging (\ref{tildev}) in the equation satisfied by $\tilde z$, we get
\begin{equation}\label{notenough}
 \lambda\,\tilde z+\tilde a\,\tilde z'-\tilde b\tilde z''
      =O(1)W'\bigl(\tilde z'+\varepsilon\tilde z+\hat v\bigr)+O(1)\hat v'
 \end{equation}
where
\begin{equation*}
 b:=A_{12}(\lambda I_r-q_2)^{-1}\bigl(c+q^{-1}_2 q_1a\bigr)
 \qquad\textrm{and}\qquad
 \tilde b:=a_0^{1/2}\,b\,a_0^{-1/2}.
\end{equation*} 
Estimate on $\tilde z$ will be obtained by mutliplying (\ref{notenough})
by $z$ and integrating. 
With the present form, we would obtain a "bad" term 
$\langle \tilde z, O(1)\tilde v'\rangle$.
For this reason, it is useful to use the relation (\ref{shortHatv}) to obtain
the following new version of (\ref{notenough})
\begin{equation*}
 \lambda\,\tilde z+\tilde a\,\tilde z'-\tilde b\tilde z''
      =O(1)W'\bigl(\tilde z'+\varepsilon\tilde z+\hat v\bigr)
      +O(1)\bigl(O(1)W'\bigl(\tilde z'+\varepsilon\tilde z+\hat v\bigr)+O(1)w'\bigr)'.
 \end{equation*}
For $\lambda=0$, the term $b$ represents the viscosity term 
given by the Chapman--Enskog expansion for the relaxation system.
Hence, it is significant to decompose $b$ as follows
\begin{equation*}
 b=b_0+\lambda b_1
\end{equation*}
where matrices $b_0$ and $b_1$ are given by
\begin{equation*}\label{bs}
   b_0:=-A_{12}q_2^{-1}(c+q^{-1}_2 q_1a),\quad
   b_1:=A_{12}(\lambda-q_2)^{-1}q_2^{-1}(c+q^{-1}_2 q_1a).
\end{equation*}
Hence we get the following equation 
satisfied by the variable $\tilde z$
\begin{equation*}
  \lambda\,\tilde z+\tilde a\,\tilde z'-\tilde b_0\,\tilde z''
   =\lambda \tilde b_1 \tilde z''+\Theta.
\end{equation*} 
where $\tilde b_i:=a_0^{1/2}\,b_i\,a_0^{-1/2}$ for $i=0,1$
and $\Theta$ is appropriately defined.
By assumptions {\sf B1.} on the reduced system, the matrix 
$\tilde b_0$ is symmetric and positive semidefinite.

From now on, we drop the tildas for shortness and consider the 
following equation
\begin{equation}\label{zequation} 
  \lambda\,z+a\,z'-b_0\,z''=\lambda b_1 z''+\Theta_1+\Theta_2'.
\end{equation} 
where $a$ is a symmetric matrix, $b_0$ is a symmetric,
positive semidefinite matrix, and
\begin{equation}\label{Thetaform}
 \left\{\begin{aligned}
 &\Theta_1:=O(1)W'\bigl(z'+\varepsilon z+\hat v+w'\bigr)\\
 &\Theta_2:=O(1)W'\bigl(z'+\varepsilon z+\hat v\bigr)+O(1)w'
  \end{aligned}\right.
\end{equation}

\begin{lemma}\label{lemma:friedRed}
Assume hypothesis {\sf C}.
Let $\lambda\in\C$ satisfying (\ref{evalueEst}),
let $z$ be a solution (\ref{zequation})
with $a$ symmetric, $b_0$ symmetric, positive semidefinite and
$\Theta_1, \Theta_2$ given in (\ref{Thetaform}) and let $k$ be a 
smooth function from $\R^n$ to the set of real skew-symmetric matrices.

Then, for $\varepsilon, \eta>0$ sufficiently small,
the following estimates hold:
\begin{eqnarray*}
  &\Real\lambda\,|z|_{{}_{L^2}}^2+\langle z',b_0\,z'\rangle
  \leq C\bigl(|z|_{{}_{W'}}^2+(\varepsilon+\eta)|z'|_{{}_{L^2}}^2
    +\varepsilon^2|\hat v|_{{}_{L^2}}^2
     +\eta^{-1}|w'|_{{}_{L^2}}^2\bigr)\\
   &\Real\langle z',k\,a\,z'\rangle
   \leq C\bigl(\eta|\Real\lambda||z|_{{}_{L^2}}^2
   +\varepsilon|z|_{{}_{W'}}^2
    +(\eta^{-1}\varepsilon^2+\eta)|z'|_{{}_{L^2}}^2
     +\varepsilon^2|\hat v|_{{}_{L^2}}^2
    +\eta^{-1}\,|w'|_{{}_{L^2}}^2\bigr)
\end{eqnarray*}
\end{lemma}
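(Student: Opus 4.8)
The plan is to mimic the structure of Lemma~\ref{lemma:fried} from the previous section, performing Friedrichs-type energy estimates on the reduced equation \eqref{zequation}, but now using the $W'$-weighted inner products and the $O(1)W'$ structure of $\Theta_1,\Theta_2$ to absorb the error terms. First I would take the scalar product of $z$ against \eqref{zequation}. The term $\lambda\langle z,z\rangle$ gives $\Real\lambda\,|z|_{{}_{L^2}}^2$; since $a$ is symmetric, \eqref{symm} converts $\langle z,a\,z'\rangle$ into $-\tfrac12\langle z,a'\,z\rangle$, and $a'=O(1)W'$, so this is controlled by $|z|_{{}_{W'}}^2$ via a bound like $|\langle z,O(1)W'z\rangle|\le C|z|_{{}_{W'}}^2$. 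For the $-\langle z,b_0\,z''\rangle$ term, integrating by parts produces $\langle z',b_0\,z'\rangle$ (the good dissipative term we keep on the left) plus $\langle z',b_0'\,z\rangle$ with $b_0'=O(1)W'$, which by \eqref{doris} is bounded by $\eta|z'|_{{}_{L^2}}^2+\eta^{-1}|z|_{{}_{W'}}^2$ (or similar). The term $\lambda\langle z,b_1 z''\rangle$: integrate by parts once to get $-\lambda\langle z',b_1 z'\rangle-\lambda\langle z',b_1'z\rangle$; using $|\lambda|\le C\varepsilon$ from \eqref{evalueEst} these contribute $C\varepsilon|z'|_{{}_{L^2}}^2$ plus a weighted cross term. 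Finally the source terms: $\langle z,\Theta_1\rangle=\langle z,O(1)W'(z'+\varepsilon z+\hat v+w')\rangle$ is handled termwise by \eqref{chromo}/\eqref{doris}, yielding $C(|z|_{{}_{W'}}^2+\eta|z'|_{{}_{L^2}}^2+\varepsilon^2|\hat v|_{{}_{L^2}}^2+\eta^{-1}|w'|_{{}_{L^2}}^2)$, and $\langle z,\Theta_2'\rangle=-\langle z',\Theta_2\rangle=-\langle z',O(1)W'(z'+\varepsilon z+\hat v)+O(1)w'\rangle$, giving $C(\eta|z'|_{{}_{L^2}}^2+\varepsilon^2|z|_{{}_{L^2}}^2+\varepsilon^2|\hat v|_{{}_{L^2}}^2+\eta^{-1}|w'|_{{}_{L^2}}^2)$ — the $\varepsilon^2|z|_{{}_{L^2}}^2$ term is absorbable since $\Real\lambda|z|_{{}_{L^2}}^2$ is what we control, wait, no; here one uses that $\varepsilon^2|z|^2_{{}_{L^2}}$ multiplied by the weight is not directly available, so one must be careful to keep all $z$-without-derivative contributions weighted by $W'$ — indeed $\langle z', O(1)W'\varepsilon z\rangle$ is genuinely weighted, so this is fine. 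Collecting, choosing $\eta$ small, gives the first estimate.

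For the second estimate I would apply the skew-symmetric matrix $k=k(W)$ to \eqref{zequation} and take the scalar product with $z'$: that is, form $\langle z', k(\lambda z + a z' - b_0 z'' - \lambda b_1 z'' - \Theta_1 - \Theta_2')\rangle=0$ and take real parts. The key term $\Real\langle z',k\,a\,z'\rangle$ is exactly what we want on the left. The term $\Real(\lambda\langle z',k z\rangle)$: since $k$ is not constant, \eqref{skewsymm} gives $\Imag\langle z',kz\rangle=-\tfrac12\langle z,k'z\rangle$ with $k'=O(1)W'$; combined with $|\Real\lambda|\le C\varepsilon^2$ and $|\lambda|\le C\varepsilon$ one gets a bound like $\eta|\Real\lambda||z|_{{}_{L^2}}^2+$ (something)$|z|_{{}_{W'}}^2$ — I'd need to track that the real part of $\lambda$ times $|z|^2$ appears with a small factor and the imaginary-part contribution, being proportional to $\langle z,k'z\rangle=\langle z,O(1)W'z\rangle\le C|z|_{{}_{W'}}^2$, is fine, yielding after a Cauchy--Schwarz split the stated $\eta|\Real\lambda||z|_{{}_{L^2}}^2+\varepsilon|z|_{{}_{W'}}^2$ type bound. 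The term $-\Real\langle z',k\,b_0\,z''\rangle$: integrate by parts to get $\Real\langle z'',k b_0 z'\rangle+\Real\langle z',(kb_0)'z'\rangle$; the first equals $-\Real\langle z',k b_0 z''\rangle$ up to $(kb_0)'$ terms only if $kb_0$ were symmetric, which it need not be — so instead one estimates $|\langle z',kb_0 z''\rangle|\le C|z'|_{{}_{L^2}}|z''|_{{}_{L^2}}$, but $z''$ is not directly controlled; this forces using \eqref{tildev}/\eqref{shortHatv} to re-express $b_0 z''$ (or rather to note $z''$ appears only through combinations already bounded) — more cleanly, one writes $b_0 z'' = \lambda z + az' - \lambda b_1 z'' - \Theta_1-\Theta_2'$ minus... hmm, actually $b_0z''$ from \eqref{zequation} equals $\lambda z + az' - \lambda b_1 z'' -\Theta_1-\Theta_2'$, which still has $z''$. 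The honest route is probably: differentiate nothing further, and instead bound $\langle z', k b_0 z''\rangle$ by integrating by parts onto $z'$ twice or recognizing $b_0 z''$ only contributes through $\langle z', b_0 z'\rangle$ terms already present, paying $\eta\langle z',b_0z'\rangle + \eta^{-1}(\dots)$; since the first Lemma statement already bounds $\langle z',b_0 z'\rangle$, this is acceptable. The $\lambda b_1 z''$ term is $O(\varepsilon)$ times second derivatives and is similarly absorbed, with $\varepsilon^2|\hat v|$ appearing because $\hat v$ relates to $z''$ via \eqref{tildev}. The source terms $\langle z',\Theta_1\rangle$ and $\langle z',\Theta_2'\rangle=-\langle z'',\Theta_2\rangle$ are bounded using \eqref{doris} and the $O(1)W'$ structure; the $z''$ hitting $\Theta_2$ is the delicate one and again one uses \eqref{shortHatv} to trade $\hat v$ and higher derivatives for $W'$-weighted and $|w'|$ terms.

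The main obstacle I anticipate is the careful bookkeeping of the second-derivative terms $z''$ appearing in \eqref{zequation} (both from $-b_0z''$ and from $\lambda b_1 z''$) when forming the $k$-weighted estimate: unlike the first estimate where $-\langle z,b_0z''\rangle$ integrates by parts cleanly into the nonnegative $\langle z',b_0z'\rangle$, in the second estimate multiplying by $z'$ leaves genuine $\langle z',(\cdot)z''\rangle$ pairings that are not obviously controlled, since $z''\in L^2$ is not among our a priori quantities. Resolving this requires either (a) using \eqref{tildev}/\eqref{shortHatv} to substitute for the offending $z''$-combinations in terms of $\hat v$, $z'$, $\varepsilon z$ and $w'$, at the cost of the $\varepsilon^2|\hat v|_{{}_{L^2}}^2$ and $\eta^{-1}|w'|_{{}_{L^2}}^2$ error terms visible in the statement, or (b) interpolation/absorption of $\langle z',b_0 z'\rangle$ back into the left-hand side with a small coefficient $\eta$. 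A secondary nuisance is that $k$, $a$, $b_0$ are only \emph{smooth}, not constant, so every integration by parts generates $W'$-weighted commutator terms $k'$, $a'$, $b_0'$, all of which are $O(1)W'$ and must be routed through \eqref{chromo}--\eqref{doris} with the right choice of the free parameter $\eta$; keeping the final powers of $\varepsilon$ and $\eta$ matching the stated inequalities is the part demanding genuine care rather than routine effort. Having both estimates, one then combines them (adding the second to a suitable multiple of the first, using \eqref{rskcondition} to make $\Real(k\,a - \text{lower order})$ coercive on $z'$) in the proof of Proposition~\ref{prop:kawareduced}, but that combination lies beyond the present Lemma.
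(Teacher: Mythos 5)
Your treatment of the first estimate is essentially the paper's: pair \eqref{zequation} with $z$, use the symmetry of $a$ via \eqref{symm}, integrate the $b_0$ term by parts to produce $\langle z',b_0\,z'\rangle$ plus $O(1)W'$ commutators, control the $\lambda b_1 z''$ term with $|\lambda|\le C\varepsilon$ from \eqref{evalueEst}, and route $\Theta_1$, $\Theta_2'$ through \eqref{chromo}--\eqref{doris}. That part is fine.

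For the second estimate there is a genuine gap, and it sits exactly where you flag it: the pairings $\langle z',k\,b\,z''\rangle$ and $\langle z'',k\,\Theta_2\rangle$. You assert that ``$z''\in L^2$ is not among our a priori quantities'' and then hedge between two resolutions, neither of which is the right one. Option (b) --- absorbing into $\langle z',b_0\,z'\rangle$ --- does not work: $k\,b_0$ is not symmetric, so $\langle z',k\,b_0\,z''\rangle$ cannot be symmetrized into a form dominated by $\langle z',b_0\,z'\rangle$, and a crude Cauchy--Schwarz still leaves a naked $|z''|_{L^2}$. Option (a) --- substituting via \eqref{tildev}/\eqref{shortHatv} --- could be made to work for the specific combination $b\,z''$ (which is, up to errors, $O(1)\hat v$), but it produces $\eta^{-1}|\hat v|_{L^2}^2$ rather than the stated $\varepsilon^2|\hat v|_{L^2}^2$, so it proves a different inequality; moreover \eqref{tildev} cannot be inverted to express $z''$ itself. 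The observation you are missing is elementary: by the definition \eqref{zdef}, $z'=u$, hence $z''=u'$ is (up to $O(1)W'$ corrections from the $a_0^{1/2}$ change of variables, themselves controlled by \eqref{sizeder}) a component of $w'$, so that $|z''|_{L^2}\le C\,|w'|_{L^2}$ plus harmless weighted lower-order terms. The paper simply bounds $|\Real\langle z',k\,b\,z''\rangle|\le C(\eta|z'|_{L^2}^2+\eta^{-1}|z''|_{L^2}^2)$ and $|\Real\langle z'',k\,\Theta_2\rangle|$ likewise, and then replaces $|z''|_{L^2}^2$ by $C|w'|_{L^2}^2$; this is precisely the origin of the $\eta^{-1}|w'|_{L^2}^2$ term on the right-hand side of the second inequality. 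Without this identification your argument for the second estimate does not close. (Your handling of $-\Real(\lambda\langle z',kz\rangle)$ via \eqref{skewsymm}, $\Real\lambda\le C\varepsilon^2$ and $|\Imag\lambda|\le C\varepsilon$ is correct and matches the paper.)
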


\begin{proof}
Taking the real part of the scalar product of $z$ against 
(\ref{zequation}), we get
\begin{equation*}
  \Real\lambda\,|z|_{{}_{L^2}}^2-\frac12 \langle z,\diff a\,W'z\rangle
  -\Real \langle z,b_0\,z''\rangle
  =\Real\langle z,\lambda\,b_1\,z''\rangle
  +\Real\langle z,\Theta\rangle.
\end{equation*}
having used the symmetry of $a$.
Since
\begin{equation*}
 \begin{aligned}
  \Real \langle z,b_0\,z''\rangle
  &=-\Real\langle z',b_0\,z'\rangle
  -\Real \langle z,\diff b_0\,W'z'\rangle\\
  &\leq -\Real\langle z',b_0\,z'\rangle
  +C(|z|_{{}_{W'}}^2+|W'|_{{}_{L^\infty}}|z'|_{{}_{L^2}}^2).
  \end{aligned}
\end{equation*}
we obtain, thanks to (\ref{sizeder}),
\begin{equation*}
  \Real\lambda\,|z|_{{}_{L^2}}^2+\Real\langle z',b_0\,z'\rangle
  \leq C\bigl(|z|_{{}_{W'}}^2+\varepsilon^2|z'|_{{}_{L^2}}^2
  +\Real\langle z,\lambda\,b_1\,z''\rangle
   +\Real\langle z,\Theta\rangle\bigr).
\end{equation*}
The term containing $b_1$ can be easily estimated by
\begin{equation*}
 \begin{aligned}
  \left|\Real \langle z,\lambda b_1\,z''\rangle\right|
  &\leq |\lambda|\left(|\langle z',b_1\,z'\rangle|
  +|\langle z,\diff b_0\,W'z'\rangle|\right)\\
  &\leq C|\lambda|\left(|z'|_{{}_{L^2}}^2
   +|z|_{{}_{W'}}^2+|W'|_{{}_{L^\infty}}|z'|_{{}_{L^2}}^2\right).
  \end{aligned}
\end{equation*}
Taking in account (\ref{sizeder}) and (\ref{evalueEst}), we get
\begin{equation*}
  \left|\Real \langle z,\lambda b_1\,z''\rangle\right|
  \leq C\,\varepsilon\left(|z'|_{{}_{L^2}}^2+|z|_{{}_{W'}}^2\right).
\end{equation*}
Therefore, we obtain
\begin{equation}\label{almostthere}
  \Real\lambda\,|z|_{{}_{L^2}}^2+\Real\langle z',b_0\,z'\rangle
  \leq C\bigl(|z|_{{}_{W'}}^2+\varepsilon|z'|_{{}_{L^2}}^2
   +\Real\langle z,\Theta\rangle\bigr).
\end{equation}
It remains to deal with the term with $\Theta$.
For what concerns $\Theta_1$, using (\ref{doris}) (with $\eta=1$) and 
(\ref{sizeder}), we have
\begin{equation*}
 |\Real\langle z,\Theta_1\rangle|\leq C|z|_{{}_{W'}}^2
  +C\varepsilon^2\bigl(|z'|_{{}_{L^2}}^2
  +|\hat v|_{{}_{L^2}}^2+|w'|_{{}_{L^2}}^2\bigr)
  \end{equation*}
The term with $\Theta_2$ can be dealt with integrating by parts
\begin{equation*}
 |\Real\langle z,\Theta_2'\rangle| =|\Real\langle z',\Theta_2\rangle|
 \leq  C\varepsilon|z|_{{}_{W'}}^2+C(\varepsilon^2+\eta)|z'|_{{}_{L^2}}^2
  +C\varepsilon^2|\hat v|_{{}_{L^2}}^2+C\eta^{-1}|w'|_{{}_{L^2}}^2
 \end{equation*}
where $\eta$ is any positive constant and $C$ is independent 
on $\eta$.
Inserting the last three estimates in (\ref{almostthere}), 
we get {\sf i.} in  Lemma \ref{lemma:friedRed}.
 
Applying $k$ to (\ref{zequation}),  $k$ as defined in {\sf B2},
and taking the $L^2$ scalar  product against $z'$, we get
\begin{equation}
   \Real\langle z',k\,a\,z'\rangle=-\Real\lambda\,\langle z', kz\rangle
   +\Real\langle z',k\,b\,z''\rangle+\Real\langle z',k\Theta\rangle.
\label{kawared1}
\end{equation}
Using the eigenvalue estimate on $\Real \lambda$, stated in 
Proposition \ref{prop:kawa}, we obtain
\begin{equation*}
   \Real\langle z',k\,a\,z'\rangle
   \leq C\left(\eta|\Real\lambda||z|_{{}_{L^2}}^2
   +(\eta^{-1}\varepsilon^2+\eta)\,|z'|_{{}_{L^2}}^2
    +\eta^{-1}\,|z''|_{{}_{L^2}}^2\right)+\Real\langle z',k\Theta\rangle
\end{equation*}
Finally, using once more (\ref{sizeder}),
for the term with $\Theta$ there hold 
\begin{equation*}
 |\Real\langle z',k\Theta_1\rangle|
 \leq C\varepsilon|z|_{{}_{W'}}^2+C\varepsilon^2\left(|z'|_{{}_{L^2}}^2
  +|\hat v|_{{}_{L^2}}^2+|w'|_{{}_{L^2}}^2\right)
\end{equation*}
\begin{equation*}
 \begin{aligned}
 |\Real\langle z',k\Theta_2'\rangle|
 &\leq  |\Real\langle z'',k\Theta_2\rangle| + |\Real\langle z',O(1)W'\Theta_2\rangle|\\
 &\leq C\varepsilon|z|_{{}_{W'}}^2+ C\varepsilon^2\bigl(|z'|_{{}_{L^2}}^2
  +|\hat v|_{{}_{L^2}}^2\bigr)+C|w'|_{{}_{L^2}}^2.
  \end{aligned}
\end{equation*}
Collecting all of these estimates, we complete the proof of 
Lemma \ref{lemma:friedRed}.
\end{proof}

\begin{proof}[Proof of Proposition \ref{prop:kawareduced}]
Choosing $\eta=\varepsilon$ and summing
up the estimates in Lemma \ref{lemma:friedRed}, we obtain,
for $\varepsilon$ small enough,
\begin{equation*}
   \Real\lambda\,|z|_{{}_{L^2}}^2+|z'|_{{}_{L^2}}^2
  \leq C\bigl(|z|_{{}_{W'}}^2
    +\varepsilon^2|\hat v|_{{}_{L^2}}^2
   +\varepsilon^{-1}\,|w'|_{{}_{L^2}}^2\bigr)
\end{equation*}
Using (in place of a first-order Friedrichs estimate) the bound
\begin{equation*}
  |\hat v|_{{}_{L^2}}^2+ |w'|_{{}_{L^2}}^2
\leq  C\left(|\Pi v|_{{}_{L^2}}^2+ |w'|_{{}_{L^2}}^2\right)
\leq C\varepsilon^2 |u|_{{}_{L^2}}^2
\end{equation*}
obtained in Proposition \ref{prop:kawa}, we get 
\begin{equation*}
  \Real\lambda\,|z|_{{}_{L^2}}^2+|u|_{{}_{L^2}}^2
  \leq C|z|_{{}_{W'}}^2+C\varepsilon |u|_{{}_{L^2}}^2.
 \end{equation*}
Hence estimate (\ref{kawared}) holds for $\varepsilon$ small.
\end{proof}

The reduced Kawashima estimate (\ref{kawared}) shows that it is possible to bound
the $L^2$ estimate of $u$ in term of $|z|_{{}_{W'}}$.
If we are able to prove a Poincar\'e--like inequality and bound 
the weighted norm $|z|_{{}_{W'}}$ by small multiples of the 
$L^2$ norm of $u$ and higher derivatives, we are done. 
This we can accomplish by changing variables in an appropriate way
and applying a weighted energy method in the spirit
of Goodman \cite{Good91,HuZu02,LiYu04}.
 
\begin{lemma}[ \cite{HuZu02} ] \label{special}
Let $a=a(W)$ and $b=b(W)$ be symmetric matrices, $b(W)\geq 0$,
with one eigenvalue $\alpha_0$ of $a$ close to zero and the others strictly
negative or positive (and uniformly separated from $\alpha_0$).
Then, there exist smooth, real matrix-valued
functions $r=r(x)$, $\ell=\ell(x)$, $\ell(x)\,r(x)=I$ for any $x$, 
satisfying, for some $C, c>0$, 
\begin{equation*}
(\ell\,r')_{pp}=0,\qquad\quad |\ell'|, |r'|\leq C|W'|;
\end{equation*}
\begin{equation}\label{blockdiag}
\ell\,a\,r=\diag(\alpha_-, \alpha_p, \alpha_+)
    =\left(\begin{array}{ccc} 
     \alpha_- & 0 & 0 \\ 0 & \alpha_p & 0  \\  0 & 0 & \alpha_+
     \end{array}\right);
\end{equation}
with $\alpha_p$ scalar, $\alpha_-, \alpha_+$ symmetric 
square matrices (with dimensions $p-1$ and
$n-p$ respectively),  $\alpha_-\leq -c<0<c\leq \alpha_+$; and
\begin{equation}\label{approxpos}
\Real \ell\, b\, r \ge -C\varepsilon.
\end{equation}
\end{lemma}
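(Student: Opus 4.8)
The plan is to produce $r$ as a smooth \emph{orthonormal} frame adapted to the spectral decomposition of the symmetric matrix $a=a(W(x))$, so that $\ell:=r^{t}$ automatically gives $\ell r=I$, and then to read off every assertion of the lemma from this choice. The construction follows \cite{HuZu02}; I sketch it here for completeness.

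First I would isolate the three spectral groups of $a(W)$. By hypothesis, for $W$ in the small neighbourhood $\cW$ the symmetric matrix $a(W)$ has a single eigenvalue $\alpha_{0}(W)$ of small absolute value, a block of eigenvalues uniformly $\le -c$, and a block uniformly $\ge c$; hence these groups are separated by fixed annuli in the complex plane, independent of $W$. The associated Riesz spectral projections $P_{-}(W),P_{0}(W),P_{+}(W)$ (contour integrals of the resolvent of $a(W)$) are therefore well defined, of constant rank, mutually orthogonal because $a$ is symmetric, and depend smoothly on $W$. Composing with the profile $W(\cdot)$ yields smooth projection fields on $\R$ whose $x$-derivatives are $O(|W'|)$, by the chain rule and \eqref{sizeder} together with boundedness of the differentials of the $P_\bullet$ on the closure of $\cW$. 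This step uses that the near-zero eigenvalue of $a$ is simple and isolated on $\cW$; that is inherited from assumption \textsf{B3} on $\diff f^{*}$, since for $\varepsilon$ small $a$ differs from a symmetrization of $\diff f^{*}$ only by an $O(\varepsilon)$ amount.

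Next I would build the frame. On the contractible set $\cW$ the rank-one field $P_{0}$ admits a smooth unit section $r_{p}(W)$ (with $P_{0}=r_{p}r_{p}^{t}$ and a fixed choice of sign), while Gram--Schmidt applied to any smooth local basis of the negative, resp. positive, spectral subspace produces smooth orthonormal frames $r_{-}(W)$, $r_{+}(W)$ for those subspaces. Set $r(x):=[\,r_{-}\mid r_{p}\mid r_{+}\,](W(x))$, a smooth orthogonal-matrix field, and $\ell:=r^{t}$, so $\ell r=I$. Differentiating $r^{t}r\equiv I$ shows $\ell r'=r^{t}r'$ is skew-symmetric, so its diagonal vanishes; in particular $(\ell r')_{pp}=r_{p}^{t}r_{p}'=\tfrac12(|r_{p}|^{2})'=0$. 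The chain rule gives $|r'|=|(\diff r)(W)W'|\le C|W'|$ and likewise $|\ell'|\le C|W'|$. For \eqref{blockdiag}: since $a$ preserves each of its (mutually orthogonal) spectral subspaces, $\ell a r=r^{t}ar$ is block diagonal with blocks indexed by $(-,p,+)$; the middle block is the scalar $\alpha_{p}=r_{p}^{t}ar_{p}=\alpha_{0}(W)$, and $\alpha_{-}=r_{-}^{t}ar_{-}$, $\alpha_{+}=r_{+}^{t}ar_{+}$ are symmetric with spectra equal to the negative, resp. positive, eigenvalue groups, hence $\alpha_{-}\le -cI$ and $\alpha_{+}\ge cI$. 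For \eqref{approxpos}: as $b(W)\ge 0$ and $r$ is real, $\ell b r=r^{t}br$ is real symmetric positive semidefinite, so $\Real\,\ell b r\ge 0\ge -C\varepsilon$; if in the application $b$ is positive semidefinite only up to an $O(\varepsilon)$ correction (as when the Chapman--Enskog viscosity is evaluated off equilibrium), that correction is exactly absorbed by the $-C\varepsilon$. (Should one prefer a non-orthonormal frame, the single scalar condition $(\ell r')_{pp}=0$ can instead be imposed a posteriori by the diagonal gauge $r_{p}\mapsto r_{p}\exp\!\big(-\!\int (\ell r')_{pp}\big)$, leaving all other conclusions intact.)

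The only genuinely delicate point is the frame construction: one must produce $r$ globally on $\R$ while keeping it a smooth function of $W$ alone, so that the quantitative bound $|r'|\le C|W'|$ — which is precisely what couples this lemma to the Goodman-type energy estimate downstream — holds. This rests on two ingredients, both consequences of the smallness assumption and \textsf{B3}: contractibility of $\cW$, which trivializes the relevant sub-bundles and so removes any monodromy obstruction to a smooth unit eigenvector for the simple eigenvalue; and the uniform spectral gaps on $\cW$, which keep the three projections of constant rank and smoothly varying. Everything else is linear algebra.
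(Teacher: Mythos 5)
Your argument is correct and is essentially the paper's own proof: both diagonalize the symmetric matrix $a$ by a smooth orthogonal frame $\omega(W)$, obtain the derivative bounds from the chain rule and \eqref{sizeder}, and get \eqref{approxpos} from near-orthogonality of the frame. Your observation that $(\ell\,r')_{pp}=r_p^t r_p'=\tfrac12(|r_p|^2)'=0$ holds automatically for a unit column is exactly the content of the paper's gauge factor $\gamma$ in \eqref{explicitgamma} (which is then identically $1$), and your parenthetical alternative gauge $r_p\mapsto r_p\exp\bigl(-\int(\ell r')_{pp}\bigr)$ is precisely the construction written there.
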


\begin{proof}
Since $a$ is symmetric, it is possible to find an othonormal transformation
$\omega=\omega(W)$ such that $\omega^t\,a\,\omega$ is (block-)diagonal with the
decomposition given in the righthand side of (\ref{blockdiag}).
The spectral separation assumption guarantees the positivity/negativity of
$\alpha_+/\alpha_-$.
Moreover, the matrix $\omega^t\,b\,\omega$ is 
positive semidefinite, $\Real b\ge 0$.

Let $\omega_p$ denote the $p$th column of $\omega$ and 
$\gamma=\gamma(x)$ be the solution of the first order linear
differential equation 
\begin{equation}\label{ode}
\gamma'= -(\omega_p\cdot\diff\omega_p\,W')\,\gamma, \quad \gamma(0)=1,
\end{equation}
or, equivalently, set
\begin{equation}\label{explicitgamma}
 \gamma(x):=\exp\left(\int_0^x \omega_p^t(W)\,\diff\omega_p(W)\,W'\,dy\right).
\end{equation}
Define the matrix $r$ and $\ell$ as 
\begin{equation}\label{defellr}
 r(x):=\omega(W)\,\diag(I_{p-1}, \gamma(x), I_{n-p}),\qquad
 \ell(x):=r^{-1}(x).
\end{equation}
Clearly estimates on $|r'|$ and $|\ell'|$ hold and
\begin{equation*}
 \begin{aligned}
 \ell\,a\,r&=\diag(I_{p-1}, \gamma^{-1}, I_{n-p})
 \diag(\alpha_-, \alpha_p, \alpha_+)
  \diag(I_{p-1}, \gamma, I_{n-p})\\
  &=\diag(\alpha_-, \alpha_p, \alpha_+),
  \end{aligned}
\end{equation*}
hence $\ell$ and $r$ still block-diagonalize $a$ in the manner claimed.
Moreover
\begin{equation*}
 (\ell r')_{pp}=\gamma^{-1}\omega_p \cdot (\gamma \omega_p)'
 =\omega_p \cdot \bigl(\diff\omega_p W'
 -(\omega_p\cdot \diff\omega_p\,W')\omega_p\bigr)=0,
\end{equation*}
since $\omega_p$ has norm equal to 1.

By (\ref{explicitgamma}), it follows that $\gamma=1+O(\varepsilon)$;
hence bound (\ref{approxpos}) follows from assumptions on $b$ and continuity.
\end{proof}

\begin{proof}[Proof of Proposition \ref{goodprop}]
To prove (\ref{good}), it is sufficient to establish the corresponding 
result for $\zeta:=\ell\,z$ with $\ell$ given in Lemma \ref{special}. 
Left multiplying (\ref{zequation}) by $\ell$, we get the eigenvalue
equation for $\zeta$
 \begin{equation}\label{goodmanzeta}
    \lambda\,\zeta+\alpha\,\ell\,r'\zeta+\alpha\,\zeta'-\beta\,\zeta''=\Xi,
  \end{equation}
where
 \begin{equation*}
   \alpha:=\ell\,a\,r,\qquad \beta:=\ell\,b_0\,r,\qquad \Xi:=\Xi_1+\Xi_2'
 \end{equation*}
 and
\begin{equation*}
 \left\{\begin{aligned}
 &\Xi_1:=O(1)|W'|\bigl(\varepsilon\,\zeta+\zeta'+\hat v'+w'\bigr),\\
 &\Xi_2:=O(1)|W'|\bigl(\zeta+\hat v\bigr)+O(1)\varepsilon\,\zeta'+O(1)\,w'.
 \end{aligned}\right.
\end{equation*}
Following \cite{Good91}, set $\rho_0(x):=1$ for any $x$, and define the two weights 
 $\rho_\pm$ as the solutions to the Cauchy problem
\begin{equation}
 \rho_\pm' = \mp M\,|W'|\,c^{-1}\,\rho_\pm, 
 \qquad \rho_\pm(0):=1, 
 \label{a4.23}
\end{equation}
where $c$ is given in Lemma \ref{special} 
and $M$ is a constant to be chosen later.
Therefore, for $\varepsilon$ so small that $O(M\varepsilon)<1$,
\begin{equation}
 \rho_\pm(x) = \exp\left(\pm\int^{x}_0 M|W'(\xi)|c^{-1}\,d\xi\right)
 = 1 + O\left(M\int_{\R}|W'(\xi)|\,d\xi\right)=O(1),
 \label{a4.24}
\end{equation}
and
\begin{equation}
 \rho'_j (x)= O(1)\,|W'(x)|, \qquad j\in\{-,0,+\}.
 \label{a4.25}
\end{equation}
Let $\rho=\rho(x)$ be the block diagonal matrix defined by
\begin{equation*}
  \rho(x):=\diag(\rho_-(x)\, I_h, \rho_0(x), \rho_+(x)\, I_k)
\end{equation*}
where $I_n$ denotes the identity $n\times n$ matrix.
Taking the real part of the $L^2-$scalar product of 
$\rho\,\zeta$ against (\ref{goodmanzeta}),  we get
 \begin{equation}\label{rhozeta}
   \Real\lambda\,\langle \rho\,\zeta,\zeta\rangle
    +\Real\langle\rho\,\zeta,\alpha\,\ell\,r' \zeta\rangle
   +\Real\langle \rho\,\zeta,\alpha\,\zeta'\rangle
    -\Real\langle \rho\,\zeta,\beta\,\zeta''\rangle
    =\Real\langle \rho\,\zeta,\Xi\rangle.
 \end{equation}
The weights $\rho_0,\rho_\pm$ are positive and $O(1)$, hence
$\langle \rho\,\zeta,\zeta\rangle^{1/2}$ is equivalent to $|\zeta|_{{}_{L^2}}$.
 
Both $\rho$ and $\rho\,\alpha$ are symmetric, hence
 \begin{equation*}
  \Real\langle \rho\,\zeta,\alpha\,\zeta'\rangle
  = \Real\langle \zeta,\rho\,\alpha\,\zeta'\rangle
  =-\frac12 \Real\langle \zeta,(\rho'\,\alpha+\rho\,\diff\alpha\,W')\,\zeta\rangle.
 \end{equation*}
By \eqref{gnl} and \eqref{dirder}, we have the key 
fact\footnote{Indeed, this is what drives the Goodman estimate; see \cite{Good86,Good91}.}
\begin{equation*}
 \diff\alpha_0\, W'\le -C |W'|
\end{equation*}
for some $C>0$.
By definition of $\rho_\pm$,  we have also
\begin{equation*}
   \rho'_\pm\,\alpha_\pm+\rho_\pm\,\diff\alpha_\pm\,W'
   =\mp\rho_\pm\,(M\,|W'|c^{-1}\alpha_\pm-\diff\alpha_\pm\,W').
\end{equation*}
Thus, for $M$ sufficiently large, there exists $C>0$, 
independent on $\varepsilon$, such that
\begin{equation*}
 \begin{aligned}
  \rho'\,\alpha+\rho\,\diff\alpha\,W'
   =&\, \diag( \rho'_-\,\alpha_-+\rho_-\,\diff\alpha_-\,W', 
    \diff\alpha_0\,W', \rho'_+\,\alpha_++\rho_+\,\diff\alpha_+\,W')\\
    &\leq -C\,|W'|\,\diag(M,1,M).
  \end{aligned}
\end{equation*}
Decomposing $\zeta$ as $(\zeta_-,\zeta_0,\zeta_+)$ and setting
 $\hat\zeta:=(\zeta_-,\zeta_+)$, we get the ``good'' term
\begin{equation*}
   \Real\langle \rho\zeta, \alpha\,\zeta'\rangle
   \geq C\int_{\R} (M|\hat \zeta|^2+|\zeta_0|^2)\,|W'|\,dx.
\end{equation*}
Next, let us deal with the ``bad'' term 
$\langle\rho\,\zeta,\alpha\,\ell\,r'\zeta\rangle$.
Since $(\ell\,r')_{pp}=0$, there holds
 \begin{equation*}
   |\Real\langle\rho\,\zeta,\alpha\,\ell\,r'\zeta\rangle|
   \leq C\int_{\R} |\hat\zeta|^2|W'|\,dx
 \end{equation*}
Hence, by choosing $M$ large enough, we get from (\ref{rhozeta})
\begin{equation*}
   \Real\lambda|\zeta|_{{}_{L^2}}^2+|\zeta|_{{}_{W'}}^2
    -\Real\langle \rho\,\zeta,\beta\,\zeta''\rangle
      \leq C\,\bigl|\Real\langle \rho\,\zeta,\Xi\rangle\bigr|
  \end{equation*}
Since
\begin{equation*}
 \Real\langle \rho\,\zeta,\beta\,\zeta''\rangle
 =-\Real\langle \rho\,\zeta',\beta\,\zeta'\rangle
 -\Real\langle \rho\,\zeta,\beta'\,\zeta'\rangle
 -\Real\langle \rho'\,\zeta,\beta\,\zeta'\rangle
\end{equation*}
the term with $\beta$ can be estimated by 
\begin{equation*}
 \Real\langle \rho\,\zeta,\beta\,\zeta''\rangle
 \leq C\varepsilon\bigl(|\zeta|_{{}_{W'}}^2
  +|\zeta'|_{{}_{L^2}}^2\bigr)
\end{equation*}
having used (\ref{approxpos}).
Hence, we obtain
\begin{equation}\label{abalone}
   \Real\lambda|\zeta|_{{}_{L^2}}^2+|\zeta|_{{}_{W'}}^2
      \leq C\varepsilon\bigl(|\zeta|_{{}_{W'}}^2
  +|\zeta'|_{{}_{L^2}}^2\bigr)
    +C\,\bigl|\Real\langle \rho\,\zeta,\Xi\rangle\bigr|
  \end{equation}
Given $\eta>0$, recalling (\ref{sizeder}), we deduce
\begin{equation*}
 \bigl|\Real\langle \rho\,\zeta,\Xi_1\rangle\bigr|
 \leq C(\varepsilon+\eta)|\zeta|_{{}_{W'}}^2
  +C\eta^{-1}\varepsilon^2(|\zeta'|_{{}_{L^2}}^2
  +|\hat v|_{{}_{L^2}}^2+|w'|_{{}_{L^2}}^2)
\end{equation*}  
with $C$ independent on $\eta$.
For what concerns the term with $\Xi_2$, integrating by parts
and using (\ref{a4.25}), there holds  
  \begin{equation*}
 \bigl|\Real\langle \rho\,\zeta,\Xi_2'\rangle\bigr|
 =\bigl|\Real\langle \rho'\,\zeta,\Xi_2\rangle\bigr|
+\bigl|\Real\langle \rho\,\zeta',\Xi_2\rangle\bigr|
 \leq \bigl|\Real\langle O(1)|W'|\,\zeta,\Xi_2\rangle\bigr|
+\bigl|\Real\langle \rho\,\zeta',\Xi_2\rangle\bigr|
 \end{equation*}  
For any $\eta>0$, estimating one by one the terms in $\Xi_2$,
we obtain
  \begin{equation*}
   \bigl|\Real\langle O(1)|W'|\,\zeta,\Xi_2\rangle\bigr|
   \leq C(\varepsilon+\eta) |\zeta|_{{}_{W'}}^2 
    +C\varepsilon^2(|\zeta'|_{{}_{L^2}}^2+|\hat v|_{{}_{L^2}}^2)
    +C\eta^{-1}\varepsilon^2 |w'|_{{}_{L^2}}^2,
 \end{equation*}  
    \begin{equation*}
 \bigl|\Real\langle \rho\,\zeta',\Xi_2\rangle\bigr|
  \leq C\eta|\zeta|_{{}_{W'}}^2
   +C(\varepsilon+\eta^{-1}\varepsilon^2+\eta)|\zeta'|_{{}_{L^2}}^2
   +C\eta^{-1}\varepsilon^2|\hat v|_{{}_{L^2}}^2+C\eta^{-1}|w'|_{{}_{L^2}}^2.
 \end{equation*} 
Choosing $\eta=\varepsilon$ and summing up,  we get
  \begin{equation*}
    \bigl|\Real\langle \rho\,\zeta,\Xi_2'\rangle\bigr|
    \leq C\varepsilon\bigl(|\zeta|_{{}_{W'}}^2 
   +|\zeta'|_{{}_{L^2}}^2+|\hat v|_{{}_{L^2}}^2\bigr)
   +C\varepsilon^{-1}|w'|_{{}_{L^2}}^2. 
 \end{equation*} 
Inserting these estimates in (\ref{abalone}), we get,
for $\varepsilon$ sufficiently small,
\begin{equation*}
   \Real\lambda|\zeta|_{{}_{L^2}}^2+|\zeta|_{{}_{W'}}^2
      \leq C\bigl(|\zeta'|_{{}_{L^2}}^2+\varepsilon|\hat v|_{{}_{L^2}}^2
   +\varepsilon^{-1}|w'|_{{}_{L^2}}^2\bigr).
  \end{equation*}
Since $\zeta=\ell\,z$, from the above 
estimate we deduce 
\begin{equation*}
   \Real\lambda|z|_{{}_{L^2}}^2+|z|_{{}_{W'}}^2
      \leq C\bigl(\varepsilon|z'|_{{}_{L^2}}^2
      +\varepsilon|\hat v|_{{}_{L^2}}^2
   +\varepsilon^{-1}|w'|_{{}_{L^2}}^2\bigr). 
  \end{equation*}
for $\varepsilon$ sufficiently small. 
Recalling that $z'=u$, estimate (\ref{good}) is proved.
\end{proof}

\appendix

\section{Structural hypotheses}\label{strucapp}

In this Appendix, we briefly discuss the structural hypotheses
of the introduction, verifying the assertions of Remark 
\ref{strucrmk} that {\sf A1--A2} and {\sf B1--B2} follow
from conditions (i)--(ii) of the remark 
(i.e., partial simultaneous symmetrizability plus genuine coupling)
together with the assumed structure $Q=(0_n,q)$.

\begin{lemma}\label{struclem}
Let $Q=(0_n,q)$.  Then, (i)--(ii) of Rmk.\ref{strucrmk} imply
{\sf A1--A2} and {\sf B1--B2}.
\end{lemma}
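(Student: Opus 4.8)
The plan is to deduce {\sf A1}, {\sf A2}, {\sf B1}, {\sf B2} by explicit linear algebra from the block structure forced by $Q=(0_n,q)$. Write
\begin{equation*}
  \diff F=\begin{pmatrix}A_{11}&A_{12}\\ A_{21}&A_{22}\end{pmatrix},\qquad
  \diff Q=\begin{pmatrix}0&0\\ q_1&q_2\end{pmatrix},\qquad
  A_0=\begin{pmatrix}A_0^{11}&A_0^{12}\\ (A_0^{12})^t&A_0^{22}\end{pmatrix},
\end{equation*}
where, by the relaxation assumptions {\sf i--ii}, at an equilibrium point $q_2=\diff_v q$ is invertible with spectrum of negative real part. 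For {\sf A1}: the symmetry of $(A_0\diff Q)(W_\pm)$ (hypothesis (i)) forces, in particular, $A_0^{22}q_2$ symmetric and $A_0^{22}q_1=q_2^t(A_0^{12})^t$. The first relation says the positive definite $A_0^{22}$ symmetrizes $q_2$, so $q_2$ has real spectrum; with the dissipativity this makes $A_0^{22}q_2$, hence also $S:=q_2^{-t}A_0^{22}$, symmetric negative definite. Given $w=(w_1,w_2)$, put $g:=q_1w_1+q_2w_2$ (so $|g|=|\Pi_\pm w|$) and substitute $w_2=q_2^{-1}(g-q_1w_1)$ in $\langle w,(A_0\diff Q)(W_\pm)w\rangle$; using $A_0^{22}q_2=q_2^tSq_2$ and $Sq_1=(A_0^{12})^t$, all $w_1$--terms cancel and $\langle w,(A_0\diff Q)(W_\pm)w\rangle=\langle g,Sg\rangle\le-c\,|\Pi_\pm w|_{{}_{L^2}}^2$, which is \eqref{propA0}. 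This same computation shows $-A_0\diff Q\ge 0$ at $W_\pm$ (indeed at every equilibrium in $\cW$) with kernel exactly $\ker\diff Q=\mathrm{ran}\,P$, the tangent space to $\{v=v_*(u)\}$, $P:=(I_n,\,-q_2^{-1}q_1)^t$; I record this for {\sf B1}.

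For {\sf B1}: the two relations above give $q_2^{-1}q_1=(A_0^{22})^{-1}(A_0^{12})^t$, whence $\diff f^*=A_{11}-A_{12}q_2^{-1}q_1=A_{11}-A_{12}(A_0^{22})^{-1}(A_0^{12})^t=:a$. I take $a_0:=\bigl((A_0^{-1})_{11}\bigr)^{-1}=A_0^{11}-A_0^{12}(A_0^{22})^{-1}(A_0^{12})^t$, symmetric positive definite as a Schur complement of $A_0$. Since $A_0\diff F$ symmetric is equivalent to $\diff F\,A_0^{-1}$ symmetric, and the $(1,1)$--block of $\diff F\,A_0^{-1}$ equals $a\,a_0^{-1}$, the matrix $a_0a$ is symmetric --- the first half of {\sf B1}. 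For the dissipativity $\Real\,a_0b_0\ge 0$ I would pass to the $A_0^{1/2}$--symmetrizing coordinates: there $-A_0\diff Q$ becomes $\diag(0,D_2)$ with $D_2>0$ (by the remark just made) along $\mathrm{ran}\,P\oplus(\mathrm{ran}\,P)^\perp$, while $\diff F$ becomes symmetric, say $\begin{pmatrix}\ast&G\\ G^t&\ast\end{pmatrix}$; the Chapman--Enskog viscosity of the equilibrium system is then, up to the fixed congruence relating these coordinates to the $u$--coordinates, $G\,D_2^{-1}G^t\ge 0$, i.e. $\Real\,a_0b_0\ge 0$. This is the standard Chapman--Enskog/entropy-dissipation identity for relaxation systems; cf. \cite{Yong01}.

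For {\sf A2} and {\sf B2} I would invoke the classical Shizuta--Kawashima equivalence (see \cite{Yong01}): for a system with positive definite symmetrizer $A_0$, $A_0\diff F$ symmetric and $\Real(A_0\diff Q)\le 0$ at equilibrium, the genuine coupling condition (ii) --- no eigenvector of $\diff F$ lies in $\ker\diff Q$ --- is equivalent to the existence of a skew-symmetric $K$ with $\Real(K\diff F-A_0\diff Q)>0$ at equilibrium, which is {\sf A2}; moreover, under the structure $Q=(0_n,q)$ genuine coupling for $(\diff F,\diff Q)$ descends to genuine coupling for the equilibrium pair $(\diff f^*,b_0)$ --- an eigenvector of $\diff f^*=a$ annihilated by $b_0$ lifts through $P$ to an eigenvector of $\diff F$ in $\ker\diff Q$ --- and the same equivalence then produces the skew-symmetric $k$ of {\sf B2}. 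Since \eqref{skcondition} and \eqref{rskcondition} are open conditions, after shrinking $\cW$ and $\cU^*$ they hold identically there, and {\sf A1} already supplies $\Real(A_0\diff Q)\le 0$.

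The steps I expect to be genuinely delicate are the Chapman--Enskog dissipativity $\Real\,a_0b_0\ge 0$ in {\sf B1} and the descent of genuine coupling to the equilibrium system used for {\sf B2}: these are where the structure $Q=(0_n,q)$ interacts nontrivially with simultaneous symmetrizability, and where I would lean on the general theory of relaxation systems rather than reprove everything. The remainder is bookkeeping with the two block relations of the {\sf A1} step plus routine continuity/openness arguments to pass from $W_\pm$ (resp. $u_0$) to $\cW$ (resp. $\cU^*$).
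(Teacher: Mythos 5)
Your proposal is correct in substance and reaches the same structural conclusions as the paper, but by a noticeably different computational route. The paper first normalizes coordinates: it block-diagonalizes the symmetrizer $A^0$ by a block-triangular congruence, passes to the right symmetrizer $(A^0)^{-1}$, and only then reads off $a_0$, $b_0$ and the kernel identity $\ker b_0=\ker\tilde A_{12}$ in the new frame, where the Chapman--Enskog viscosity appears manifestly as $-\tilde A_{12}\tilde q^{-1}\tilde A_{12}^*\ge 0$ and the descent of genuine coupling to the reduced system is immediate. You instead stay in the original block coordinates and extract the two identities $A_0^{22}q_2=q_2^tA_0^{22}$ and $A_0^{22}q_1=q_2^t(A_0^{12})^t$ from the symmetry of $A_0\,\diff Q$ at equilibrium; your resulting exact identity $\langle w,A_0\diff Q\,w\rangle=\langle g,Sg\rangle$ with $g=q_1w_1+q_2w_2$ and $S=q_2^{-t}A_0^{22}<0$ gives \eqref{propA0} more explicitly than the paper does, and your Schur-complement verification that $a_0a$ is symmetric (via the $(1,1)$ block of $\diff F\,A_0^{-1}$) is a clean substitute for the paper's coordinate change. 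Both arguments defer the actual construction of $K$ and $k$ to the classical genuine-coupling/compensating-matrix equivalence of Shizuta--Kawashima--Yong, so you are on equal footing there. The two points you flag as delicate --- that $b_0$ of \eqref{bs} is congruent to a matrix of the form $G D_2^{-1}G^t$ so that $\Real a_0b_0\ge0$, and that $\ker b_0$ coincides with $\ker(c+q_2^{-1}q_1a)$ so that an equilibrium eigenvector killed by $b_0$ lifts through $P=(I,-q_2^{-1}q_1)^t$ to an eigenvector of $\diff F$ in $\ker\diff Q$ --- are exactly the points the paper settles by its normalization (where they reduce to $\ker(\tilde A_{12}\tilde q^{-1}\tilde A_{12}^*)=\ker\tilde A_{12}^*$ for $\tilde q$ symmetric negative definite); if you want to avoid the coordinate change entirely you would need to carry out that kernel computation in your own frame, but your lifting argument itself is sound once that containment is in hand. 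In short: same skeleton, different (and in places more explicit) linear algebra; the paper's normalization buys shorter formulas, yours buys a direct formula for the dissipative quadratic form.
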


\begin{proof}
These follow by more general results of Yong \cite{Yong01}.\footnote{
Symmetrizability is not explicitly stated in \cite{Yong01}, but is clear
from the development.}
We give a proof for completeness.
As all properties are coordinate-independent properties of the linearization
about constant states, we may without loss
of generality take $A^0$ block-diagonal.
For, $TA^0$ is block-lower triangular for $T$ block-upper triangular,
whence $TA^0 T^*$ is symmetric block-diagonal, and a left symmetrizer
for the system obtained by the change of coordinates
$w\to (T^*)^{-1}w$, $A\to (T^*)^{-1}AT^*$, $Q\to (T^*)^{-1}QT^*$.

Observing that $\tilde A^0:=(A^0)^{-1}$ is a right symmetrizer if $A^0$ is
a left symmetrizer, we obtain
\begin{equation*}
\tilde A^0 w_t + \tilde A w_x= \tilde Q w,
\end{equation*}
where $\tilde A^0$ is symmetric positive definite and block-diagonal,
$\tilde A$ is symmetric, and 
$\diff \tilde Q= \begin{pmatrix}0 & 0\\ 0 & \tilde q\end{pmatrix}$ 
symmetric with $q<0$.
(Note: the latter key fact follows by 
\begin{equation*}
 \diff\tilde Q= (T^*)^{-1}\diff Q\,T^* \tilde A^0,
\end{equation*}
the fact that $T^*$ is block-lower triangular, and that the first block row
of $\diff Q$ by assumption vanishes.)
Rewriting, we have
\begin{equation*}
w_t + \bar A w_x= \bar Q w,
\end{equation*}
where 
\begin{equation*}
\bar A=\begin{pmatrix}
(\tilde A_{11}^0)^{-1}\tilde A_{11} &
(\tilde A_{11}^0)^{-1}\tilde A_{12}\\
(\tilde A_{22}^0)^{-1}\tilde A_{21} &
(\tilde A_{22}^0)^{-1}\tilde A_{22}
\end{pmatrix},\qquad
\diff\bar Q=\begin{pmatrix}
0& 0\\
0& (\tilde A_{22}^0)^{-1}\tilde q
\end{pmatrix}.
\end{equation*}

In these coordinates, one readily computes that 
\begin{equation*}
a_0=(\tilde A^0_{11})^{-1} \tilde A_{11},\qquad
b_0=-(\tilde A^0_{11})^{-1} \tilde A_{12}^*\tilde q \tilde A_{12}^*,
\end{equation*}
hence $\ker b_0= \ker \tilde A_{12}$, and genuine coupling for
the reduced system is the condition that no eigenvector
of $a_0=(\tilde A^0_{11})^{-1} \tilde A_{11}$ lie in $\ker \tilde A_{12}$,
the same condition as for genuine coupling of the full system,
and $\tilde A^0_{11}$ is a left symmetrizer for the reduced system
with  $\tilde A^0_{11}b_0= -\tilde A_{12}^*\tilde q \tilde A_{12}^*$
symmetric positive semidefinite since $\tilde q$ is 
symmetric negative definite.
\end{proof}

\end{document}